\journal{arXiv}
\newtheorem{theorem}{Theorem}
\newtheorem{lemma}[theorem]{Lemma}
\newtheorem{corollary}[theorem]{corollary}
\newtheorem{definition}{Definition}
\newproof{proof}{Proof}
\newcommand{\req}[1]{Eq.\,(\ref{#1})}
\begin{document}

\begin{frontmatter}

\title{A Posteriori Error Bounds for Two Point Boundary Value Problems: A Green's Function Approach}
\author{Jeremiah Birrell\footnote{email: jbirrell@email.arizona.edu}}
\address{Program in Applied Mathematics, The University of Arizona, Tucson,  Arizona, 85721, USA}

\begin{abstract}
We present a computer assisted method for generating existence proofs and a posteriori error bounds for solutions to two point boundary value problems (BVPs).  All truncation errors are accounted for and, if combined with interval arithmetic to bound the rounding errors, the computer generated results are mathematically rigorous.  The method is formulated for $n$-dimensional systems and does not require any special form for the vector field of the differential equation.  It utilizes a numerically generated approximation to the BVP fundamental solution and Green's function  and thus can be applied to stable BVPs whose initial value problem is unstable. The utility of the method is demonstrated on a pair of singularly perturbed model  BVPs and by using it to rigorously show the existence of a periodic orbit in the Lorenz system.  
\end{abstract}

\begin{keyword}
two point boundary value problems,  computer assisted proofs, periodic orbits, a posteriori error analysis, singular perturbations\\
\hspace{-2mm} \textit{ 2010 Mathematics Subject Classification.} Primary: 37C27, 65G20, 34B15; Secondary: 34B27, 65L10, 65L11.
\end{keyword}

\end{frontmatter}

\section{Introduction}
We propose a new computer assisted method for rigorously proving invertibility and bounding the norm of the inverse of operators of the form\footnote{Conditions on $A(t)$ and the precise functions spaces in which we work will be specified later.} 
\begin{equation}\label{lin_bvp}
F[v](t)=\left(v(t)-v(0)-\int_0^t A(s)v(s)ds,B_0v(0)+B_1v(1)\right),\hspace{2mm} A(t),B_i\in\mathbb{R}^{n\times n}.
\end{equation}
Operators of this form correspond to linear two point boundary value problems (BVPs) and arise naturally as the Fr\'{e}chet derivative of operators defining nonlinear BVPs.  Bounds on the norm of the inverse of operators of this type are required in a posteriori proofs of existence and local uniqueness of nonlinear equations via the Newton-Kantorovich theorem~\cite{kantorovich1964functional}.

Computer assisted methods have been successfully applied to many problems in differential equations and dynamical systems, including chaos in the R\"{o}ssler \cite{0951-7715-10-1-016} and Lorenz  \cite{Galias1998165} systems, the existence of the Lorenz  attractor \cite{Tucker19991197}, flow (in)stability \cite{Asen11981,watanabe2009computer}, existence of heteroclinic~\cite{Lessard2014}, homoclinic~\cite{doi:10.1137/100812008,doi:10.1137/12087654X,Arioli201551}, and periodic~\cite{Coomes,radii_poly} orbits, solution of the Feigenbaum equation~\cite{koch}, and the (in)stability of matter~\cite{fefferman1986relativistic}.  Generalizations to elliptic and parabolic PDEs are discussed in, for example, \cite{Nakao1991323,nagatou1999approach,nakao2001numerical,plum2008existence}. As it pertains to two point boundary value problems, computer assisted proofs are  typically based on a fixed point theorem \cite{Yamamoto,Plum,Nakao1992489}.  In particular, various forms of the Newton-Kantorovich theorem were used in \cite{McCarthy,Kedem,Takayasu}.  A method based on differential inequalities as presented in \cite{Gohlen}, a method using radii polynomials is given in~\cite{radii_poly}, and \cite{doi:10.1137/13090883X} utilized Chebyshev  series.

The methods in \cite{McCarthy,Gohlen,Plum,Takayasu,Nakao1992489,Fogelklou20111227,Fogelklou2012} focused on second order  equations; \cite{McCarthy} worked with an equivalent integral formulation while \cite{Takayasu,Nakao1992489} used a finite element based method. \cite{Fogelklou2012} treated an integral boundary condition. The results in \cite{Urabe,Kedem}  utilized the theory of initial value problem (IVP) fundamental solutions to treat $n$-dimensional two point BVPs, the former focusing on the periodic case. 

 Our method most closely resembles that of Ref.~\cite{Kedem} and applies to $n$-dimensional systems without any special assumptions on the form of the vector field.  The present method differs from \cite{Kedem} in the use of the BVP fundamental solution and Green's function as the primary tool, as opposed to the IVP fundamental solution.  See also \cite{doi:10.1137/140987973}, which used a Green's function method to solve a singular problem on an unbounded domain.

A general outline of our method is as follows.
\begin{enumerate}
\item Generate (non-rigorous) approximations, $\tilde \Phi_j$, $\tilde G_{i,j}$ to the BVP fundamental solution and Green's function, respectively, on a mesh.
\item Extend these to piecewise polynomial approximations $\tilde\Phi(t)$ and $\tilde G(t,s)$ on the whole domain.
\item Use $\tilde \Phi$ and $\tilde G$ to define an approximate inverse, $H$, to the BVP operator $F$.
\item Using estimates involving $H$, prove existence of an exact inverse to $F$ and derive a machine computable formula that bounds the norm of $F^{-1}$.
\end{enumerate}
 The use of the BVP fundamental solution makes the method presented here applicable in a range of situations for which IVP based methods are ill suited, namely when the  IVP fundamental solution or its inverse has a `large' norm but the corresponding BVP fundamental solution and Green's function has a `moderately sized' norm.  This is often the case in singularly perturbed problems and so the  effectiveness of the method is demonstrated on two such examples in Section \ref{sec:num_tests}. We will refer to these imprecisely defined `large' and `moderate' regimes as unstable and stable respectively, similar to the discussion in \cite{ascher_petzold}.

To obtain completely rigorous results, some means of bounding the floating point rounding error is needed, typically based on the theory of interval arithmetic.  There are many references from which to learn more about validated numerics and  interval arithmetic, for example~\cite{moore1979methods,kearfott1996interval,jaulin2001applied,moore2009introduction,tucker2011validated}.  Results in this paper are presented so that if interval arithmetic is used for all arithmetic operations and interval valued extensions are available for the vector field of the differential equation and its derivatives (up to a specified order) then the resulting computer generated existence proofs and error bounds are mathematically rigorous (we use the MATLAB package INTLAB \cite{Ru99a}  to obtain  rigorous bounds).  However, in light of the additional computational cost of interval arithmetic as compared to floating point, if complete rigor is not needed  then the method can just as easily be used with traditional floating point operations to obtain approximate error bounds.

In Section \ref{sec:newton} we fix some notation and discuss a general strategy for a posteriori existence proofs via the Newton-Kantorovich theorem. For comparison, we outline the method of \cite{Kedem} in Section \ref{sec:IVP_method}.  Development of our method begins in Section  \ref{sec:gen} and continues in subsequent sections. Background on the Green's function of a BVP is given in Section \ref{sec:greens}. The method is tested on several sample problems in Section \ref{sec:num_tests}.

\section{A Posteriori Existence Proofs via Newton-Kantorovich}\label{sec:newton}

Consider a general non-linear two point BVP on $[0,1]$,
\begin{equation}\label{nonlin_bvp}
y^\prime(t)=f(t,y(t)),\hspace{2mm} g(y(0),y(1))=0,
\end{equation}
 where $f:\mathbb{R}^{n+1}\rightarrow\mathbb{R}$ is continuous, differentiable in $y$, $D_yf$ is continuous on $\mathbb{R}^{n+1}$, and $g:\mathbb{R}^{2n}\rightarrow\mathbb{R}^n$ is $C^1$.  In particular, local existence and uniqueness of the IVP is guaranteed.

In order to apply functional analytic methods to this problem it is convenient to follow \cite{Kedem} and define the Banach spaces
\begin{equation}\label{X_def}
X_1=C([0,1],\mathbb{R}^n), \hspace{2mm} C_0([0,1],\mathbb{R}^n)\equiv \{y\in X_1:y(0)=0\},\hspace{2mm} X_2=C_0([0,1],\mathbb{R}^n)\times\mathbb{R}^n
\end{equation}
where the $\ell^\infty$-norm is used on $\mathbb{R}^n$, denoted by $|\cdot|$, the sup-norm $\|y\|\equiv \sup_{[0,1]}|y(t)|$ is used on the function spaces, and  the norm $\|(y,b)\|=\max\{\|y\|,|b|\}$ is used on $X_2$.

 In this setting, the BVP (\ref{nonlin_bvp}) can be put in a form more amenable to analysis as follows.  Define the map $G:X_1\rightarrow X_2$
\begin{equation}\label{G_def}
G[y](t)=\left(y(t)-y(0)-\int_0^tf(s,y(s))ds,g(y(0),y(1))\right).
\end{equation}
Solutions of the BVP are in one to one correspondence with zeros of $G$. Its first component, valued in $C_0([0,1],\mathbb{R}^n)$, is denoted by $G^1$ and its second component, valued in $\mathbb{R}^n$, by $G^2$. As shown in~\cite{Kedem}, $G$ is Fr\'{e}chet-$C^1$ with derivative  at $y_0\in X_1$ given by
\begin{align}\label{DF_formula}
DG^1(y_0)[v](t)=&v(t)-v(0)-\int_0^tD_y f(s,y_0(s))v(s)ds,\\
DG^2(y_0)[v](t)=&D_{y_1}g(y_0(0),y_0(1))v(0)+D_{y_2}g(y_0(0),y_0(1))v(1).\notag
\end{align}

Having reformulated the problem in terms of finding zeros of a $C^1$ map between Banach spaces, a natural tool is the Newton-Kantorovich theorem; see  \ref{newton_th} for a precise statement. Invertibility of $DG(y_0)$ along with a bound on the norm of its inverse are required to apply the theorem and these are typically the most difficult ingredients to obtain in practice.  Once this has been done the remainder of the estimates are relatively straightforward and have been discussed  in, for example, \cite{Kedem}. For this reason we will predominately focus on the invertiblility of $DG(y_0)$ i.e. on a posteriori methods for proving the existence and uniqueness to solutions of linear BVPs.

\section{IVP Fundamental Solution Method}\label{sec:IVP_method}

Motivated by the previous section, consider a linear BVP on $[0,1]$ as given by the bounded linear operator $F:X_1\rightarrow X_2$ defined by the formula (\ref{lin_bvp}),   where $A(t)$ is a continuous $\mathbb{R}^{n\times n}$ valued map.

In Theorem 1 of \cite{Kedem}, the author shows how the solvability of $F[v]=(r,w)$ is related to the matrix of fundamental solutions, defined as the solution to the initial value problem (IVP)
\begin{equation*}
Y^\prime(t)=A(t)Y(t),\hspace{2mm}Y(0)=I.
\end{equation*}
In our notation, this result is the following.
\begin{theorem}\label{thm:ivp}
Let $A(s)$ and $r(t)$ be continuous matrix and vector valued respectively with $r(0)=0$ and consider the two point BVP
\begin{equation*}
v(t)=v(0)+\int_0^t A(s)v(s)ds+r(t),\hspace{2mm} B_0v(0)+B_1v(1)=w.
\end{equation*}
Let $Y(t)$ be the corresponding matrix of fundamental solutions and define $R=B_0+B_1Y(1)$.  Then the BVP has a unique solution iff $R$ is nonsingular and the solution is given by 
\begin{align*}
v(t)=&Y(t)\bigg[R^{-1}B_0\int_0^tY^{-1}(s)A(s)r(s)ds\\
&+(R^{-1}B_0-I)\int_t^1Y^{-1}(s)A(s)r(s)ds-R^{-1}(B_1r(1)-w)\bigg]+r(t).\notag
\end{align*}
\end{theorem}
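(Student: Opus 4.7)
The plan is standard: apply variation of parameters to the integral equation to obtain the general solution in terms of the free parameter $v(0)$, then impose the two-point boundary condition to pin $v(0)$ down.

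First, writing any candidate solution as $v(t) = Y(t) c(t)$ and substituting into $v(t) = v(0) + \int_0^t A(s) v(s)\, ds + r(t)$, I would use $Y^\prime = AY$, $Y(0) = I$, and the observation that the integral equation automatically forces $v - r$ to be $C^1$ to derive $Y(t) c^\prime(t) = r^\prime(t)$. To avoid assuming $r$ is differentiable, I would then integrate by parts, using $r(0) = 0$ together with $\frac{d}{ds}Y^{-1}(s) = -Y^{-1}(s) A(s)$, which gives
\[
c(t) = v(0) + Y^{-1}(t) r(t) + \int_0^t Y^{-1}(s) A(s) r(s)\, ds,
\]
and therefore
\[
v(t) = Y(t) v(0) + r(t) + Y(t) \int_0^t Y^{-1}(s) A(s) r(s)\, ds.
\]
Equivalently, one may simply take the displayed expression as an ansatz and verify directly that it satisfies the integral equation for continuous $r$.

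Next, evaluating at $t = 1$ and imposing $B_0 v(0) + B_1 v(1) = w$ produces
\[
R\, v(0) = w - B_1 r(1) - B_1 Y(1) \int_0^1 Y^{-1}(s) A(s) r(s)\, ds,
\]
where $R = B_0 + B_1 Y(1)$. Thus $v(0)$, and hence $v$, is uniquely determined precisely when $R$ is nonsingular; this settles both directions of the equivalence, since when $R$ is singular any nonzero element of $\ker R$ yields a nontrivial solution of the homogeneous BVP ($r = 0$, $w = 0$), so uniqueness fails.

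To match the stated form, I would substitute the above expression for $v(0)$ back into the formula for $v(t)$ and rearrange using the key algebraic identity $R^{-1} B_1 Y(1) = I - R^{-1} B_0$ (from $B_1 Y(1) = R - B_0$), then split $\int_0^1 = \int_0^t + \int_t^1$. The coefficient of $\int_0^t Y^{-1}(s) A(s) r(s)\, ds$ collapses to $R^{-1} B_0$ and the coefficient of $\int_t^1 Y^{-1}(s) A(s) r(s)\, ds$ to $R^{-1} B_0 - I$, with $-R^{-1}(B_1 r(1) - w)$ left over as the free term, reproducing the claimed formula exactly. The only genuine subtlety in the argument is justifying the variation-of-parameters step when $r$ is merely continuous, which is handled by the integration-by-parts rewrite (or, more cleanly, by direct verification of the closed-form ansatz); the rest is bookkeeping.
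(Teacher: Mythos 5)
Your proof is correct. Note that the paper does not actually prove Theorem~\ref{thm:ivp}; it is quoted from Kedem's paper~\cite{Kedem} and only restated here in the paper's notation, so there is no internal argument to compare against. Your argument is the natural one: write $v(t)=Y(t)c(t)$, use integration by parts (rather than differentiating $r$) to extract $c(t)=v(0)+Y^{-1}(t)r(t)+\int_0^t Y^{-1}(s)A(s)r(s)\,ds$, impose the boundary condition to get $Rv(0)=w-B_1r(1)-B_1Y(1)\int_0^1 Y^{-1}(s)A(s)r(s)\,ds$, and then rearrange using $R^{-1}B_1Y(1)=I-R^{-1}B_0$ after splitting $\int_0^1=\int_0^t+\int_t^1$. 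I checked the verification that the variation-of-parameters ansatz satisfies the integral equation and the final algebraic simplification; both are correct, and the handling of the iff via the kernel of $R$ applied to the homogeneous problem is complete. One cosmetic remark: it would be cleanest to finish the forward direction explicitly (when $R$ is nonsingular, any two solutions differ by $Y(t)c$ with $Rc=0$, hence $c=0$), though this is implicit in your derivation since $v(0)$ is forced.
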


It was also shown in \cite{Kedem} how to apply this theorem to rigorously prove solvability of the BVP and derive guaranteed error bounds using numerically constructed approximations to $Y(t)$ and $Y^{-1}(t)$.  Because of the use of $Y$, this will be called the IVP fundamental solution method or simply the IVP method.  

  A limitation of this method is that the stability properties of an IVP take center stage; the quantities $|Y(t)|$ and $|Y^{-1}(t)|$ feature critically in the  error estimates derived in \cite{Kedem}.  Unfortunately there are situations in which a  BVP of interest is stable whereas the corresponding IVP is not.  In such situations the bounds obtained by the IVP fundamental solution method will be very pessimistic due to the large size of $|Y(t)|$ and/or $|Y^{-1}(t)|$.  

 As discussed in \cite{ascher_petzold}, perhaps the simplest family of test problems that exhibits these features is 
\begin{equation*}\label{test_prob}
y^{\prime\prime}(t)=y(t),\hspace{2mm} y(0)=1,\hspace{2mm} y(b)=0,
\end{equation*} 
where $b>0$.  We convert this to the equivalent system
\begin{equation*} 
y_1^\prime(t)=by_2(t),\hspace{2mm} y_2^\prime(t)=by_1(t),\hspace{2mm} y_1(0)=1,\hspace{2mm} y_1(1)=0.
\end{equation*} 
 The corresponding IVP fundamental solution matrix and its inverse are
\begin{equation*}
Y(t)=\left( \begin{array}{cc}
\cosh(bt) & \sinh(bt)  \\
\sinh(bt) & \cosh(bt)\\
\end{array} \right),\hspace{2mm}
Y^{-1}(t)=\left( \begin{array}{cc}
\cosh(bt) & -\sinh(bt)  \\
-\sinh(bt) & \cosh(bt)\\
\end{array} \right).
\end{equation*}
 These have exponentially increasing norms as a function of $b$.  This is in spite of the fact that the value and derivative of the exact solution 
\begin{equation*}
y(t)=\cosh(bt)-\cosh(b)\sinh(bt)/\sinh(b)
\end{equation*}
 have $\mathcal{O}(1)$ norm.  

Even if one tries to avoid separately bounding the norms of $Y$ and $Y^{-1}$ and take advantage of the cancellation that can occur in $Y(t)R^{-1}B_0Y^{-1}(s)$ and $Y(t)(R^{-1}B_0-I)Y^{-1}(s)$, the above example is sufficient to show that this cancellation does not in general occur over the entire $(t,s)$ domain. For example,
\begin{equation*}
(Y(t)R^{-1}B_0Y^{-1}(s))^1_1=(\cosh(bt)-\cosh(b)\sinh(bt)/\sinh(b))\cosh(bs)
\end{equation*}
which diverges at $t=0$, $s=1$ as $b\rightarrow\infty$. Even if cancellation does occur, it is difficult to compute $Y(t)$ and $Y^{-1}(s)$ to high enough accuracy that one could take advantage of such cancellation.

In summary, while the  IVP fundamental solution method suffices for problems whose IVP is stable, if one is interested in BVPs whose corresponding IVP is unstable then an alternative method is needed.  Our contribution is designed to address such situations by utilizing the BVP fundamental solution and Green's function in place of the IVP fundamental solution.

\section{A More General Framework}\label{sec:gen}
We are now in position to begin discussing an alternative method for producing a posteriori existence proofs and error bounds based on the BVP fundamental solution.  This method will be called the BVP fundamental solution method, or BVP method for short, in contrast  to the IVP method.  The BVP fundamental solution is introduced in the following section, but first we discuss the function spaces that will be used in our analysis.

The input to any a posteriori method consists of a certain (numerically generated) approximate solution.  For us this will consist of function values on a mesh $0=t_0<...<t_N=1$.  Before error bounds can be produced, these need to be extended to an approximate solution on the entire interval $[0,1]$.  While one could work in spaces of continuous functions  by interpolating the values at the nodes, as done in the IVP method, we will find things more convenient (and the resulting error bounds more transparent) when this assumption is removed, both from the solution space and the matrix $A(t)$. In this section we outline the slightly generalized framework in which the equations will be formulated.

To this end, fix a mesh, $t_i$, as above and let $Y_1$ be the space of functions that are piecewise continuous on this mesh.  More precisely, $Y_1$ is the direct sum
\begin{equation}\label{Y1_def}
Y_1=\bigoplus_{j=1}^NC([t_{j-1},t_j],\mathbb{R}^n).
\end{equation} 
On each factor the norm will be taken to be a weighted $\ell^\infty$-norm
\begin{equation*}
\|(y_1,...,y_N)\|_W\equiv \max_{j=1,...,N}\|y_j\|_j, \hspace{2mm} \|y_j\|_j\equiv\sup_{t\in[t_{j-1},t_j]}|y_j(t)|_W,\hspace{2mm} |v|_W\equiv |W v|_{\ell^\infty}
\end{equation*}
where the weight, $W$, is some invertible matrix.  In practice, we will take $W$ to be diagonal and choose the diagonal elements to balance the magnitude of the errors in the different components of the solution. One could consider the generalization where the weight is allowed to vary between subintervals but this will not be explored here.

$Y_1$ is a Banach space and the maps that send any $y\in Y_1$ to its value at any $t\in\{0,1\}\cup_{j=1}^N(t_{j-1},t_j)$ or at $t_j^{\pm}$ ($+$ denots limit from above, $-$  limit from below) are well defined bounded linear maps. Elements of $Y_1$ can be identified with elements of $L^\infty$ and  integration or differentiation (of piecewise $C^1$) elements of $Y_1$ will be defined using this identification. 

Mirroring the continuous case, (\ref{X_def}),  define a second space
\begin{equation}\label{Y2_def}
Y_2=\{v\in Y_1: v(0)=0\}\oplus\mathbb{R}^n.
\end{equation}
This is a Banach space under the norm $\|(v,b)\|_{W_1,W_2}=\max\{\|v\|_{W_1},|b|_{W_2}\}$ where $W_i$ are any invertible matrices. Finally, note that the $X_i$'s are closed subspaces of the $Y_i$'s.  For simplicity we will take $W_1=W_2=W$.  This will allow us to drop the subscript $W$'s from the above norms in the following sections.

Having specified the functional analytic framework, the definition of nonlinear BVPs can be extended to this setting by defining $G:Y_1\rightarrow Y_2$ by the same formula as in (\ref{G_def}).  Note that any zero of this map is automatically continuous, and so solutions of the generalized problem are still in 1-1 correspondence with solutions of the original BVP.

The extended operator $G$ is still Fr\'echet-$C^1$, with the same formula for the derivative (\ref{DF_formula}).  Note, however, that $D_y f(s,y_0(s))$ is no longer continuous as a function of $s$ for a general $y_0\in Y_1$. Therefore, in the interest of applying Newton-Kantorovich to prove existence, we are lead to consider bounded linear maps $F:Y_1\rightarrow Y_2$ defined as in (\ref{lin_bvp}), where the assumptions on $A(s)$ are relaxed to allow for piecewise continuity on the given mesh.

\section{Green's Function for Linear BVPs}\label{sec:greens}
We are now in position to discuss the BVP fundamental solution, on which our method will be based. Let $A:[0,1]\rightarrow \mathbb{R}^{n\times n}$ be piecewise continuous on the mesh $t_j$ (i.e. continuous, except for possible jumps at the $t_j$'s).  A BVP fundamental solution is defined as a continuous map $\Phi:[0,1]\rightarrow\mathbb{R}^{n\times n}$ consisting of invertible matrices that is piecewise $C^1$ on the mesh $t_j$ and solves
\begin{equation*}
\Phi^\prime=A\Phi,\hspace{2mm} B_0\Phi(0)+B_1\Phi(1)=I.
\end{equation*}
 Analogously to the IVP fundamental solution, if a BVP fundamental solution exists then it provides an inverse to the BVP operator $F$ through the Green's function, as defined in the following theorem.
\begin{theorem}\label{lin_BVP_solution}
If a BVP fundamental solution $\Phi$ exists then it is unique and the unique solution $v\in Y_1$ to
\begin{equation*}
v(t)-v(0)-\int_0^t A(s)v(s)ds=r(t),\hspace{2mm} B_0v(0)+B_1v(1)=w,
\end{equation*}
where $(r,w)\in Y_2$,
can be written
\begin{align*}
v(t)=&\Phi(t)(w-B_1r(1))+r(t)+\int_0^1 G(t,s)A(s)r(s)ds
\end{align*}
where the  Green's function is defined as
\begin{align}\label{greens_def}
G(t,s)=\begin{cases}
\Phi(t)B_0\Phi(0)\Phi^{-1}(s), & \text{if }{s\leq t} \\
-\Phi(t)B_1\Phi(1)\Phi^{-1}(s), & \text{if }{ s>t}.
 \end{cases}
\end{align}
Therefore $\Phi$ provides us with an inverse of the operator $F:Y_1\to Y_2$ (given by the formula (\ref{lin_bvp})),\begin{equation}\label{F_inv_formula}
F^{-1}:(r,w)\rightarrow\Phi(t)(w-B_1r(1))+r(t)+\int_0^1 G(t,s)A(s)r(s)ds.
\end{equation}
\end{theorem}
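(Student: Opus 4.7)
The plan is to first establish uniqueness of $\Phi$, then verify that the proposed formula solves the BVP, and finally establish uniqueness of the solution $v$.

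For uniqueness of $\Phi$, I would invoke the IVP fundamental solution $Y$ of $Y^\prime = A Y$, $Y(0) = I$: even though $A$ is only piecewise continuous on the mesh, $Y$ exists and is continuous and piecewise $C^1$, obtained by solving classically on each subinterval and gluing values at mesh points. Any candidate BVP fundamental solution must then satisfy $\Phi(t) = Y(t) \Phi(0)$, and substituting into the boundary condition $B_0 \Phi(0) + B_1 \Phi(1) = I$ yields $(B_0 + B_1 Y(1)) \Phi(0) = I$. Invertibility of $\Phi(0)$ (part of the definition of a BVP fundamental solution) forces $R \equiv B_0 + B_1 Y(1)$ to be invertible, pinning down $\Phi(0) = R^{-1}$ and hence $\Phi(t) = Y(t) R^{-1}$, which is unique.

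For the main formula, the key step is to split the Green's function integral at $s = t$ and write the candidate as $v(t) = r(t) + \Phi(t) h(t)$, where
\[
h(t) = (w - B_1 r(1)) + B_0 \Phi(0) \int_0^t \Phi^{-1}(s) A(s) r(s)\, ds - B_1 \Phi(1) \int_t^1 \Phi^{-1}(s) A(s) r(s)\, ds.
\]
Differentiating, the jump of the kernel at $s = t$ combines the two contributions to give $h^\prime(t) = (B_0 \Phi(0) + B_1 \Phi(1)) \Phi^{-1}(t) A(t) r(t) = \Phi^{-1}(t) A(t) r(t)$ by the boundary condition on $\Phi$. Hence $v^\prime = r^\prime + A \Phi h + \Phi h^\prime = A v + r^\prime$ in the piecewise sense, and integrating from $0$ to $t$, using $r(0) = 0$ (forced by the equation) and absolute continuity of $\Phi h$, reproduces the integral equation. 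For the boundary condition, direct evaluation gives
\[
B_0 v(0) + B_1 v(1) = (B_0 \Phi(0) + B_1 \Phi(1))(w - B_1 r(1)) + B_1 r(1) + \int_0^1 [B_0 G(0,s) + B_1 G(1,s)] A(s) r(s)\, ds,
\]
which collapses to $w$ once one checks that $B_0 G(0,s) + B_1 G(1,s) = 0$ for $s \in (0,1)$. This cancellation is obtained by rewriting $B_0 \Phi(0) = I - B_1 \Phi(1)$ on both terms, which yields $B_0 \Phi(0) B_1 \Phi(1) = B_1 \Phi(1) B_0 \Phi(0)$.

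Uniqueness of $v$ then comes for free from the same IVP argument applied to the difference of two solutions, which satisfies the homogeneous BVP: invertibility of $R$ forces the difference to vanish identically. The main obstacle I anticipate is the bookkeeping in the verification step, in particular ensuring the jump of the Green's function kernel at $s = t$ produces exactly the driving term $A r$ on the right-hand side of the integral equation via the relation $B_0 \Phi(0) + B_1 \Phi(1) = I$, and confirming the boundary-condition cancellation from the same identity.
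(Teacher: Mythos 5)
Your proof is correct and proceeds along essentially the same line the paper proposes: direct verification that the candidate formula satisfies the integral equation and boundary condition, together with a routine uniqueness argument; the paper simply refers to Ascher--Petzold and says to verify by composing $F^{-1}$ with $F$ in both orders rather than spelling the computation out. Your reduction $\Phi(t)=Y(t)R^{-1}$ with $R=B_0+B_1Y(1)$, the split of the Green's function integral into $v=r+\Phi h$, and the cancellation $B_0G(0,s)+B_1G(1,s)=0$ via $B_0\Phi(0)=I-B_1\Phi(1)$ are all sound.

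One phrasing should be tightened. The paper explicitly emphasizes that $r(t)$ is only assumed continuous with $r(0)=0$, not piecewise $C^1$, so the intermediate line ``$v'=r'+A\Phi h+\Phi h'=Av+r'$ in the piecewise sense'' is not literally meaningful: $r'$ need not exist even piecewise. Your very next sentence, which integrates, already contains the fix, but it is cleaner never to differentiate $r$ at all. Simply observe that $v-r=\Phi h$ is absolutely continuous with $(\Phi h)'=A\Phi h+Ar=A(v-r)+Ar=Av$ a.e., so
\begin{equation*}
v(t)-v(0)-r(t)=\Phi(t)h(t)-\Phi(0)h(0)=\int_0^tA(s)v(s)\,ds,
\end{equation*}
using $r(0)=0$; this is exactly the integral equation, with no appeal to $r'$. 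With that adjustment the argument covers the generality the paper insists on.
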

Note that the Green's function (\ref{greens_def}) is not to be confused with the notation for the operator associated with a nonlinear BVP (\ref{G_def}).  The meaning will be clear from the context.

Theorem \ref{lin_BVP_solution} is essentially found in \cite{ascher_petzold}.  The only complications beyond the presentation there is that $A(t)$ is only piecewise continuous on the mesh and $r(t)$ must be taken to be a piecewise continuous function on the mesh that vanishes at $t=0$, and not necessarily a function of the form
\begin{equation*}
r(t)=\int_0^tq(s)ds
\end{equation*}
for some continuous $q$, as would be the case when transforming from the differential to the integral form of the BVP.  One can directly verify the formula by composing (\ref{F_inv_formula}) with (\ref{lin_bvp}) and vice versa.

\section{Proving Invertibility}
We now suppose that we are given $C^1$ maps $\tilde \Phi_j:[t_{j-1},t_j]\rightarrow\mathbb{R}^{n\times n}$, $\tilde G_{i,j}:[t_{i-1},t_i]\times[t_{j-1},t_j]\rightarrow\mathbb{R}^{n\times n}$, $i\neq j$, 
\begin{align}
\tilde G^+_{i,i}:\{(t,s)\in [t_{i-1},t_i]\times[t_{i-1},t_i]:s\geq t\} \rightarrow\mathbb{R}^{n\times n},
\end{align}
and
\begin{align}
\tilde G^-_{i,i}:\{(t,s)\in [t_{i-1},t_i]\times[t_{i-1},t_i]:s\leq t\} \rightarrow\mathbb{R}^{n\times n}
\end{align}
 which are thought of  approximations to the fundamental BVP solution and Green's function on their domains (but without any a priori knowledge that an exact $\Phi$ actually exists) which were obtained by (non-rigorous) numerical means. We will construct $\tilde G$ using $\tilde\Phi$ and an approximation to its inverse by replacing the corresponding exact quantities in the definition of the Green's function (\ref{greens_def}), but this is not strictly necessary.

Our strategy for proving the existence of $F^{-1}:Y_2\rightarrow Y_1$ is to mimic the formula (\ref{F_inv_formula}) for $F^{-1}:X_2\rightarrow X_1$ in terms of the fundamental solution  but using the approximate fundamental solution and Green's function instead. More precisely, let $\tilde G_{i,i}(t,s)= \tilde G_{i,i}^-(t,s)1_{s\leq t}+\tilde G_{i,i}^+(t,s) 1_{s>t}$ and define the bounded linear map $H:Y_2\to Y_1$,
\begin{align}
H[r,w]_i(t)=& \tilde\Phi_i(t)(w-B_1r(1))+r_i(t)+\sum_{j=1}^N\int_{t_{j-1}}^{t_j} \tilde G_{i,j}(t,s) A_j(s) r_j(s) ds.
\end{align}
where the subscript $i$ refers to the component on the interval $[t_{i-1},t_i]$.

$H$ will be used to find conditions under which we can prove the invertiblility of $F$. This will rely on the following well known result concerning perturbations of the identity, see for example \cite{folland}.
\begin{lemma}\label{I_perturb_lemma}
Let $A:X\rightarrow X$ be a bounded linear map on a Banach space.  If $\|I-A\|<1$ then $A$ has a bounded inverse that satisfies
\begin{equation*}
\|A^{-1}\|\leq\frac{1}{1-\|I-A\|}.
\end{equation*}
\end{lemma}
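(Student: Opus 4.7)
The plan is to prove this via the Neumann series argument, which is the standard approach for perturbations of the identity. First I would set $B = I - A$ so that $\|B\| < 1$ by hypothesis and $A = I - B$. The candidate for the inverse is the geometric series $S = \sum_{k=0}^\infty B^k$, formally suggested by the identity $(I-B)^{-1} = \sum_{k=0}^\infty B^k$ for scalars.

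Next I would establish convergence. Using submultiplicativity of the operator norm, $\|B^k\| \leq \|B\|^k$, so the partial sums $S_N = \sum_{k=0}^N B^k$ form a Cauchy sequence in the Banach space $\mathcal{B}(X)$ of bounded operators on $X$ (which is complete since $X$ is). Hence $S_N$ converges in operator norm to some $S \in \mathcal{B}(X)$, and the triangle inequality yields the norm bound
\begin{equation*}
\|S\| \leq \sum_{k=0}^\infty \|B\|^k = \frac{1}{1-\|B\|} = \frac{1}{1-\|I-A\|}.
\end{equation*}

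To show $S = A^{-1}$, I would compute the telescoping products $A S_N = (I-B)\sum_{k=0}^N B^k = I - B^{N+1}$ and similarly $S_N A = I - B^{N+1}$. Since $\|B^{N+1}\| \leq \|B\|^{N+1} \to 0$, passing to the limit (using continuity of left and right composition by the fixed bounded operator $A$) gives $AS = SA = I$. This proves $A$ is invertible with $A^{-1} = S$, and the desired norm bound follows from the estimate above.

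There is no real obstacle here: the argument is entirely mechanical once one recognizes it as a Neumann series. The only subtle points are invoking completeness of $\mathcal{B}(X)$ (which requires $X$ to be Banach, as assumed) and making sure the series is shown to be both a left and right inverse, not merely one-sided.
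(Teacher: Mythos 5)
The paper does not prove this lemma; it simply cites it to Folland as a well-known fact. Your Neumann series argument is the standard proof and is correct: you identify $B=I-A$, establish absolute convergence of $\sum_k B^k$ in the Banach algebra $\mathcal{B}(X)$ via $\|B^k\|\le\|B\|^k$, derive the norm bound from the geometric series, and verify via the telescoping identities $AS_N=S_NA=I-B^{N+1}$ that the limit is a two-sided inverse. Nothing is missing, and you correctly flag the two places where care is needed (completeness of $\mathcal{B}(X)$ and two-sidedness of the inverse).
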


In fact, a slight generalization of this result will be needed, given below.
\begin{lemma}\label{inverse_lemma}
Let $A:X\rightarrow Y$, $B:Y\rightarrow X$ be bounded linear maps where $X,Y$ are Banach spaces. If $\|I-AB\|\leq\alpha$ for $\alpha<1$ where $I$ is the identity on $Y$ then $A$ is surjective.   If $A$ is also injective then $A$, $B$ are invertible, 
\begin{equation*}
\|A^{-1}\|\leq\frac{\|B\|}{1-\alpha} \text{ and } \|A^{-1}-B\|\leq \frac{\alpha \|B\|}{1-\alpha}.
\end{equation*}
\end{lemma}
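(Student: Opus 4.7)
The plan is to reduce everything to Lemma \ref{I_perturb_lemma} applied to the composition $AB:Y\to Y$. Since $\|I-AB\|\leq\alpha<1$, that lemma immediately yields invertibility of $AB$ on $Y$ together with $\|(AB)^{-1}\|\leq 1/(1-\alpha)$, and every subsequent claim will be extracted from this single fact. (The hypothesis is stated as $\leq\alpha$ rather than $<1$, but since $\alpha<1$ this is harmless.)

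To prove surjectivity of $A$, I would use the direct construction: for any $y\in Y$, the element $x\equiv B(AB)^{-1}y\in X$ satisfies $Ax=(AB)(AB)^{-1}y=y$. Under the additional hypothesis that $A$ is injective, $A$ is then a continuous bijection between Banach spaces, so the bounded inverse theorem yields bounded $A^{-1}$. Combined with the identity $A\cdot B(AB)^{-1}=I_Y$, injectivity of $A$ pins down the explicit formula $A^{-1}=B(AB)^{-1}$, from which the bound $\|A^{-1}\|\leq \|B\|/(1-\alpha)$ follows by submultiplicativity of the operator norm. Invertibility of $B$ then comes for free via the factorization $B=A^{-1}(AB)$ as a composition of bijections, with explicit inverse $B^{-1}=(AB)^{-1}A$.

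For the error estimate, the algebraic manipulation
$A^{-1}-B \;=\; B(AB)^{-1}-B \;=\; B(AB)^{-1}\bigl(I-AB\bigr)$
reduces the problem to taking norms, which yields $\|A^{-1}-B\|\leq \|B\|\cdot (1-\alpha)^{-1}\cdot\alpha$ as required.

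I do not anticipate any real obstacle; the argument is essentially a perturbation-of-the-identity computation, and the only bookkeeping subtlety is keeping track of which identity (on $X$ or on $Y$) is at play at each step, since $A$ and $B$ act between different spaces and only the composition $AB$, not $BA$, is assumed to be close to the identity.
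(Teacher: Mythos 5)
Your proposal is correct and follows essentially the same route as the paper: both invoke Lemma \ref{I_perturb_lemma} to invert $AB$, deduce surjectivity of $A$, use injectivity plus the open mapping/bounded inverse theorem to obtain bounded $A^{-1}$, and then extract the norm bounds from the factorization $A^{-1}=B(AB)^{-1}$. The minor cosmetic difference is that the paper writes the difference estimate as $\|A^{-1}(I-AB)\|$ whereas you write $\|B(AB)^{-1}(I-AB)\|$, but these are the same expression once $A^{-1}=B(AB)^{-1}$ is in hand.
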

\begin{proof}
By the above lemma, $AB$ has a bounded inverse with
\begin{equation*}
\|(AB)^{-1}\|\leq\frac{1}{1-\|I-AB\|}\leq \frac{1}{1-\alpha}.
\end{equation*}
 In particular $A$ is surjective.  It is injective by assumption so  by the open mapping theorem it has a bounded inverse. This implies $B$ has a bounded inverse as well and
\begin{equation*}
\|A^{-1}\|=\|BB^{-1}A^{-1}\|=\|B(AB)^{-1}\|\leq \frac{\|B\|}{1-\alpha},
\end{equation*}
\begin{equation*} 
\|A^{-1}-B\|=\|A^{-1}(I-AB)\|\leq \|A^{-1}\|\|I-AB\|\leq \frac{\alpha \|B\|}{1-\alpha}.
\end{equation*}
\end{proof}

\subsection{Applying the Fredholm Alternative}\label{sec:fred_alt}
In finite dimensions and when $X$ and $Y$ have the same dimension then the hypothesis of injectivity of $A$ in Lemma \ref{inverse_lemma} is not needed but in infinite dimensions it is required in general. However, when we have the appropriate element of compactness, the situation in infinite dimensions mirrors the finite dimensional case.  More specifically,  recall one consequence of the Fredholm alternative, see for example \cite{eidelman2004functional}.
\begin{theorem}
Let $X$ be a Banach space and $K:X\rightarrow X$ be a compact operator.  Then $I-K$ is injective iff it is surjective.
\end{theorem}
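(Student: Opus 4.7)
The plan is to use the two classical structural facts about compact perturbations of the identity: that $I-K$ has closed range, and that the kernels and ranges of its iterates form chains of closed subspaces on which Riesz's lemma can be applied to force a contradiction with compactness.

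First I would establish that $\operatorname{Ran}(I-K)$ is closed. The standard route is to pass to the quotient $X/\ker(I-K)$, on which $I-K$ descends to an injective bounded map, and show that this induced map is bounded below. If it were not, one would pick $\tilde x_n$ with $\|\tilde x_n\|=1$ and $(I-K)\tilde x_n\to 0$, lift to representatives $x_n$ with $\|x_n\|\le 2$, extract a subsequence along which $Kx_n$ converges (using compactness of $K$), and conclude that $x_n$ converges to some $x$ with $(I-K)x=0$ and $\|\tilde x_n\|\to 0$, a contradiction. Bounded below implies closed range.

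Next, for the direction that injectivity implies surjectivity, I would consider the decreasing chain $X_n=(I-K)^n(X)$. Each $X_n$ is closed, because $(I-K)^n=I-K_n$ where $K_n$ is compact (compact operators form an ideal), so the closed-range result applies to each iterate. If $I-K$ is injective but not surjective, then $X_1\subsetneq X_0=X$, and injectivity propagates the strict inclusion so that $X_{n+1}\subsetneq X_n$ for every $n$. Riesz's lemma then furnishes unit vectors $x_n\in X_n$ with $\operatorname{dist}(x_n,X_{n+1})\ge 1/2$. For $m>n$, a short computation gives $Kx_n-Kx_m = x_n - \bigl(x_m+(I-K)x_n-(I-K)x_m\bigr)$ where the parenthesized vector lies in $X_{n+1}$, whence $\|Kx_n-Kx_m\|\ge 1/2$. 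This contradicts compactness of $K$, which should produce a Cauchy subsequence of $\{Kx_n\}$.

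For the reverse direction, surjectivity implies injectivity, I would run the dual argument on the increasing chain of kernels $N_n=\ker((I-K)^n)$, which are closed subspaces. Assuming $I-K$ is surjective but not injective, surjectivity of each iterate $(I-K)^n$ allows one to pull back nontrivial elements of $N_n$ to show $N_n\subsetneq N_{n+1}$ strictly for all $n$, and Riesz's lemma applied in $N_{n+1}$ relative to $N_n$ again produces a bounded sequence whose image under $K$ has no Cauchy subsequence. I expect the main obstacle to be the propagation of strict inclusions along the chains under the respective hypotheses, since this is where the interplay between injectivity/surjectivity and the algebra of the iterates lives; once those strict inclusions are in hand, Riesz plus compactness deliver the contradiction almost mechanically.
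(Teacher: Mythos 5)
Your proof is correct; it is the standard textbook argument for the Fredholm alternative, built on the closed-range lemma for $I-K$ and on Riesz's lemma applied to the chains $X_n=(I-K)^n(X)$ and $N_n=\ker((I-K)^n)$. The paper itself does not prove this theorem---it is stated only as a recalled fact with a citation to a functional-analysis text---so there is no in-paper proof to compare against; your propagation of the strict inclusions (via injectivity of $(I-K)^n$ for the range chain, via surjectivity of $(I-K)^n$ for the kernel chain) is exactly the point that needs care, and you handle it correctly.
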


If a Fredholm alternative-like condition holds for $F$ then surjectivity will imply injectivity, and hence $\|I-FH\|<1$ will imply the invertiblity of $F$.  In addition, this will provide a bound on $F^{-1}$ per Lemma \ref{inverse_lemma}.  These are the two ingredients that are needed in order to utilize the Newton-Kantorovich theorem.  With slight modifications of the above we could just as well work with $I-HF$ but find this less appealing, as discussed in  \ref{app:I-HF}.

In order to more easily apply the Fredholm alternative, we reformulate the BVP operator $F$ in terms of a map from a single Banach space to itself. Define $\tilde F:Y_2\rightarrow Y_2$ by
\begin{align*}
\tilde F[v,w](t)=&\left(v(t)-\int_0^t A(s)(w+v(s))ds,B_0w+B_1(w+v(1))\right)\\
=&(v(t),w)-\left(\int_0^t A(s)(w+v(s))ds,(I-B_0-B_1)w-B_1v(1)\right)\\
\equiv& (I-K)[v,w](t).
\end{align*}
We have $F[v]=\tilde F[v-v(0),v(0)]$ and so $F$ is surjective or injective if and only if $\tilde F$ is.  

The second component of $K$ has finite rank and the first is an integral operator with bounded kernel over a bounded domain and whose image consists of continuous functions, hence the compactness of $K$ follows from an application of the Arzela-Ascoli theorem.  This implies that the Fredholm alternative applies to $\tilde F$ and hence, showing that $\|I-FH\|\leq\alpha<1$ is sufficient to prove that $F$ has a bounded inverse with
\begin{equation}\label{F_inv_bound}
\|F^{-1}\|\leq\frac{\|H\|}{1-\alpha},\hspace{2mm} \|F^{-1}-H\|\leq  \frac{\alpha \|H\|}{1-\alpha}.
\end{equation}
We will now discuss how to bound $\|I-FH\|$ and $\|H\|$.

\subsection{Bounding $\|I-FH\|$}
In   \ref{app:I-FH}  a formula for $I-FH$ is derived.  For each $t\not\in\{ t_i\}_{i=0}^N$ let $j_t=\max\{j:t_j<t\}$.  The two components of $I-FH$ are given by
\begin{align}\label{I-FH_components}
&(I-FH)[r,w]_1(t)\\
=&-\int_0^t\tilde \Phi^\prime(s)-A(s)\tilde\Phi(s)ds(w-B_1r(1))+\int_0^t(\tilde G(z^-,z)-\tilde G(z^+,z)+I)A(z)r(z)dz\notag\\
&+\sum_{j=1}^{j_t}(\tilde\Phi(t_j^-)-\tilde\Phi(t_{j}^+))(w-B_1r(1))+\int_0^1\left[\sum_{j=1}^{j_t}(\tilde G(t_j^-,z)-\tilde G(t_{j}^+,z))\right]A(z)r(z)dz\notag\notag\\
&-\int_0^t\int_0^1(\partial_s \tilde G(s,z)-A(s) \tilde G(s,z))A(z)r(z)dzds,\notag\\
&(I-FH)[r,w]_2\notag\\
=&[I-B_0\tilde\Phi(0)-B_1\tilde\Phi(1)](w-B_1r(1))-\int_0^1 \left(B_0\tilde G(0,s)+B_1\tilde G(1,s)\right)A(s)r(s)ds\notag
\end{align}
where superscripts $+$ and $-$ denote the limits from above and below respectively.  For completeness, the analogous expression for $I-HF$ is given in  \ref{app:I-HF}.

The most straightforward way to use (\ref{I-FH_components}) to bound the norm of $I-FH$ is to define $\tilde G$ using (\ref{greens_def}), replacing the exact quantities with $\tilde\Phi$ and its approximate inverse, and then use the submultiplicative property of induced matrix norms and bound each of the quantities in the Green's function individually.  This leads to an estimate whose operation count scales linearly in the number of subintervals, $N$. If $|\Phi|$ and $|\Phi^{-1}|$ are not too large over $[0,1]$ then this is a good strategy.  However, this is often not the case, even in situations where the BVP is stable.  We illustrate this phenomenon with the same test problem introduced in (\ref{test_prob}).

\subsection{Behavior of the Test Problem}\label{sec:test_prob}
Consider again the simple test problem, (\ref{test_prob}). The fundamental solution and its inverse are
\begin{equation*}
\Phi(t)=\frac{1}{\sinh(b)}\left( \begin{array}{cc}
\sinh(b(1-t)) & \sinh(bt)  \\
-\cosh(b(1-t)) & \cosh(bt)\\
\end{array} \right),\hspace{2mm}
\Phi^{-1}(t)=\left( \begin{array}{cc}
\cosh(bt) & -\sinh(bt)  \\
\cosh(b(1-t)) & \sinh(b(1-t))\\
\end{array} \right).
\end{equation*}
While $\|\Phi\|$ is bounded, $\|\Phi^{-1}\|$ diverges as $b\rightarrow\infty$.  In fact, both $\|B_0\Phi(0)\Phi^{-1}\|$ and $\|B_1\Phi(1)\Phi^{-1}\|$ diverge as well.

 The general notion of BVP stability only requires the norm of the fundamental solution and of the Green's function to be small, but not the inverse of the fundamental solution. See for example \cite{ascher_petzold} for details. Therefore, in general we must abandon any estimate based on bounding $\|\tilde\Phi\|$ and $\|\tilde\Phi^{-1}\|$ separately and use one based on bounding the entire Green's function. The test problem (\ref{test_prob}) does have a  bounded Green's function 
\begin{equation*}
G(t,s)=\begin{cases}
\frac{1}{\sinh(b)}\left( \begin{array}{cc}
\sinh(b(1-t))\cosh(bs) & -\sinh(b(1-t))\sinh(bs)  \\
-\cosh(b(1-t))\cosh(bs) &\cosh(b(1-t))\sinh(bs)\\
\end{array} \right), & \text{if }{s\leq t} \\
-\frac{1}{\sinh(b)}\left( \begin{array}{cc}
\sinh(bt)\cosh(b(1-s)) & \sinh(bt)\sinh(b(1-s))  \\
 \cosh(bt)\cosh(b(1-s))& \cosh(bt)\sinh(b(1-s))\\
\end{array} \right), & \text{if }{ s>t}.\\
 \end{cases}
\end{equation*}

We emphasize that $\|G\|=\mathcal{O}(1)$ does not arise from a large cancellation of the computed results.  Rather, at each endpoint the boundary conditions annihilate the growing modes of $\Phi^{-1}$.  This suggests that we should not run into the same degree of numerical cancellation issues using the BVP method as opposed to the IVP method.

\section{A Sharper Bound}\label{sec:sharper_bound}
In this section we show how to compute a sharper bound on $\|I-FH\|$ by bounding the Green's function as a whole.  Bounding the second component of $I-FH$ is straightforward so we focus on the first.  

To this point the only assumption made about $\tilde\Phi$ and $\tilde G$ is that they are piecewise $C^1$.  In practice, a non-rigorous numerical algorithm will provide us with approximations $\hat\Phi_0$ to $\Phi(0)$,  $\hat\Phi_1$ to $\Phi(1)$, as well as   $\tilde\Phi_j$ and $\tilde G_{i,j}$, approximations to $\Phi$ and $G$ at the centers of the intervals $[t_{i-1},t_i]$ and rectangles $[t_{i-1},t_i]\times [t_{j-1},t_j]$ respectively.   

In practice, we obtain the the $\tilde\Phi_j$ by numerically solving the BVP and, for $i\neq j$, define
\begin{equation}\label{G_ij_def}
\tilde G_{i,j}=\begin{cases}
\tilde\Phi_iB_0\hat\Phi_0\Psi_j, & \text{if }{j< i} \\
-\tilde\Phi_iB_1\hat\Phi_1\Psi_j, & \text{if }{ j> i}
 \end{cases}
\end{equation}
where $\Psi_j$ is a numerical approximation to $\Phi_j^{-1}$ and the multiplications are carried out in {\em computer arithmetic}. On the diagonal $i=j$ two different versions of $\tilde G_{i,i}$ are needed,
\begin{equation}\label{G_pm_def}
 \tilde G^-_{i,i}=\frac{1}{2}\tilde\Phi_i\left(B_0\hat\Phi_0-B_1\hat\Phi_1\right)\Psi_i+\frac{1}{2}I,\hspace{2mm} \tilde G^+_{i,i}=\frac{1}{2}\tilde\Phi_i\left(B_0\hat\Phi_0-B_1\hat\Phi_1\right)\Psi_i-\frac{1}{2} I,
\end{equation}
 corresponding to the lower and upper triangular halves respectively.  Note that these reduce to the correct expressions when the exact BVP fundamental solution and its inverse are used and the calculations are carried out in exact arithmetic. Also note that the jump condition $\tilde G^-_{i,i}-\tilde G^+_{i,i}=I$  holds in exact arithmetic.

The bounds we will derive below will not depend on the details of how one obtains approximations to the fundamental solution and Green's function at the centers of the regions.  Irrespective of how they are constructed, given  values $\tilde\Phi_j$, $\tilde G_{i,j}$, and $\tilde G_{i,i}^{\pm}$, we construct the functions  $\tilde G_{i,j}(t,s)$ and $\tilde \Phi_i(t)$  on each subinterval or rectangle of the mesh to be the polynomials that satisfy the equations
\begin{equation*}
\partial_t  G(t,s)=A(t) G(t,s),\hspace{2mm} \partial_s  G(t,s)= -G(t,s)A(s),\hspace{2mm}G(t_{i-1/2},t_{j-1/2})=\tilde G_{i,j}
\end{equation*}
 and
\begin{equation*}
\Phi^\prime=A\Phi,\hspace{2mm} \Phi(t_{j-1/2})=\tilde\Phi_{j}
\end{equation*} 
up to some order $m-1$, $m\geq 1$ (here we assume each $A_i(t)$ is $C^m$).  Formulas for the coefficients and the ODE residuals are given in  \ref{app:taylor}.    Again, on the rectangle with $i=j$ different versions of $\tilde G$ are needed, one corresponding to the upper triangle and one to the lower, constructed in this manner from $\tilde G^+_{i,i}$ and $\tilde G^-_{i,i}$ respectively. The resulting functions $\tilde\Phi_i$, $\tilde G_{i,j}$, and $\tilde G_{i,i}^\pm$ can be pieced together into piecewise smooth functions, $\tilde \Phi$ and $\tilde G$ on $[0,1]$ and $[0,1]\times[0,1]$.  This is the reason we generalized the solution space in Section \ref{sec:gen} to allow for jump discontinuities.

In the absence of jump discontinuities at the mesh points, this method reduces to a piecewise Taylor model IVP method for $\Phi$, see for example \cite{Nedialkov199921}.  While well suited for initial value problems, such methods perform extremely poorly for BVPs with an unstable IVP such as the examples in Section \ref{sec:num_tests}. Therefore, having accurate $\tilde\Phi_{j}$'s as inputs is crucial to the success of the method; the Taylor series simply ensure sufficiently small error over the interior of each subinterval of the mesh.

 Using (\ref{I-FH_components}) and \ref{app:taylor} we can derive the following bound on $I-FH$.
\begin{align}\label{I-FH_bound}
\|I-FH\|= \max\{\|(I-FH)_1\|,|(I-FH)_2|\}
\end{align}
where
\begin{align}
&\|(I-FH)_1\|\label{I-FH_bound2_1}\\
\leq&(1+|B_1|)\sum_{j=1}^{N-1} \left|P_j(t_j)\tilde\Phi_j -P_{j+1}(t_j) \tilde\Phi_{j+1} \right|+\frac{1+|B_1|}{(m+1)2^m}\sum_{j=1}^{N}h_j^{m+1}\sup_s| R_j(s)\tilde\Phi_j| \notag\\
&+\sum_{j=1}^{N}h_j \sup_z\left|P_j(z)(I-\tilde G_{j,j}^-+\tilde G_{j,j}^+)Q_j(z)A_j(z) \right| +  \frac{1}{(m+2)2^{m+1}}\sum_{j=1}^{N}h_j^{m+2}\sup_z| \tilde R_j(z)A_j(z)|\notag\\
&+\frac{1}{(m+1)2^m}\sum_{j=1}^{N}  h_j^{m+1}\sum_{k=1}^N h_k\max\{\sup_{s,z}|R_j(s)\tilde G_{j,k}^+Q_k(z)A_k(z)|,\sup_{s,z}|R_j(s)\tilde G_{j,k}^-Q_k(z)A_k(z)|\} \notag\\
&+\sum_{j=1}^{N-1} \sum_{k=1}^N h_k \sup_z\left| ( P_j(t_j)\tilde G_{j,k}^--P_{j+1}(t_j) \tilde G_{j+1,k}^+)Q_k(z) A_k(z) \right|\notag
\end{align}
and
\begin{align}
&|(I-FH)_2|\leq(1+ |B_1|)|I-B_0 P_1(0)\tilde\Phi_1-B_1 P_N(1)\tilde\Phi_N|\label{I-FH_bound2_2}\\
&+\sum_{j=1}^Nh_j\sup_s|(B_0 P_1(0)\tilde G_{1,j}^+ +B_1 P_N(1)\tilde G_{N,j}^- )Q_j(s)A_j(s)| .\notag
\end{align}

Here we have defined $\tilde G^{\pm}_{i,j}=\tilde G_{i,j}$ for $i\neq j$, $h_i=t_i-t_{i-1}$, the subscripts $i$  denote the functions restricted to $[t_{i-1},t_i]$,  $P_i(t)$ and $Q_i(t)$ are defined in (\ref{Pj_def}) and (\ref{Qj_def}) respectively, and $\tilde R_i(t)$ is defined in (\ref{PQ_I_resid}). 

To bound $R_i(t)$, the ODE residual (\ref{P_ODE_resid}), on $[t_{i-1},t_i]$ a means for analytically bounding the error term in the Taylor series expansion of $A_i(t)$ is needed.  This can be achieved from an interval matrix valued extension for $A_i(t)$ and its derivatives.

To use (\ref{F_inv_bound}) to bound the norm of $F^{-1}$ a machine computable bound on $\|H\|$ is needed.
\begin{align}\label{H_bound}
\|H\|\leq&  \max_i\bigg\{1+(1 +|B_1|)\sup_t|P_i(t) \tilde\Phi_i|\\
&+\sum_{j=1}^N h_j \max\{\sup_{t,s}|P_i(t)\tilde G_{i,j}^-Q_j(s)A_j(s)|,\sup_{t,s}|P_i(t)\tilde G_{i,j}^+Q_j(s)A_j(s)|\}\bigg\} .\notag
\end{align}

Using interval arithmetic, interval matrix enclosures of $P_i(t)$, $Q_j(t)$, $A_j(t)$ etc. can be computed and used to bound the matrix products in \req{I-FH_bound2_1}, \req{I-FH_bound2_2}, and \req{H_bound}. This leads to an algorithm whose complexity has   $\mathcal{O}(n^3N^2)$  as its leading order term in $N$, where $n$ is the dimension of the ODE system.   In particular, it does not depend on $m$, and so from the point of view of complexity, the degree of $\tilde\Phi$ can be set as high as is necessary for the Taylor series errors over the subintervals to be negligible, making the boundary conditions and jump terms the main sources of error. Parallelizing the computation of these bounds is straightforward.

Our main result can be summarized in the following theorem.
\begin{theorem}\label{main_theorem}
Let $Y_1$ and $Y_2$ be defined by (\ref{Y1_def}) and (\ref{Y2_def}) respectively and consider the two point BVP $F[v]=(r,w)$  defined by $F:Y_1\rightarrow Y_2$,
\begin{equation*}
F[v](t)=\left(v(t)-v(0)-\int_0^t A(s)v(s)ds,B_0v(0)+B_1v(1)\right)
\end{equation*}
where $A(t):[0,1]\rightarrow \mathbb{R}^{n\times n}$ is piecewise $C^m$ on the mesh $\{t_i\}_{i=0}^N$ for some $m\geq 1$ and $B_i\in\mathbb{R}^{n\times n}$.

Let the piecewise polynomial functions $\tilde\Phi(t)$ and $\tilde G(t,s)$ be constructed from  $\tilde \Phi_i$ and $\tilde G_{i,j}^{\pm}$ as detailed in \ref{app:taylor}.

  Define $H:Y_2\rightarrow Y_1$ by 
\begin{align*}
H[r,w](t)&=\tilde\Phi(t)(w-B_1r(1))+r(t)+\int_0^1 \tilde  G(t,s)A(s)r(s)ds.
\end{align*}

If $\|I-FH\|\leq\alpha <1$, as computed by (\ref{I-FH_bound}-\ref{I-FH_bound2_2}), then $F^{-1}:Y_2\rightarrow Y_1$ exists,  is bounded, and
\begin{equation*}
\|F^{-1}\|\leq\frac{\|H\|}{1-\alpha}.
\end{equation*}
\end{theorem}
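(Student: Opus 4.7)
The plan is to assemble the theorem from pieces that have already been established in the preceding sections: Lemma \ref{inverse_lemma}, the Fredholm alternative argument of section \ref{sec:fred_alt}, and the component-wise bounds (\ref{I-FH_bound2_1})--(\ref{I-FH_bound2_2}) and (\ref{H_bound}). The structure is essentially to verify the hypotheses of Lemma \ref{inverse_lemma} with $A=F$ and $B=H$.

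First I would note that $H:Y_2\to Y_1$ is a bounded linear map by inspection of its defining formula, with a norm bound given by (\ref{H_bound}); since $A(t)$ is piecewise $C^m$ and $\tilde{\Phi}$, $\tilde{G}$ are piecewise smooth, the integral terms are well defined on $Y_2$. Similarly $F:Y_1\to Y_2$ is bounded. The first component $F[v]^1(t)=v(t)-v(0)-\int_0^t A(s)v(s)\,ds$ vanishes at $t=0$, so $F$ indeed maps into $Y_2$. Next I would invoke Lemma \ref{inverse_lemma} with $\alpha = \|I-FH\|$: by hypothesis $\alpha<1$, and hence $FH:Y_2\to Y_2$ has a bounded inverse satisfying $\|(FH)^{-1}\|\leq 1/(1-\alpha)$. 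In particular, $F$ is surjective onto $Y_2$.

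The key remaining step is to establish injectivity of $F$, since only then does Lemma \ref{inverse_lemma} yield invertibility of $F$ together with the bound $\|F^{-1}\|\leq \|H\|/(1-\alpha)$. For this I would follow the argument of section \ref{sec:fred_alt}: rewrite $F$ in terms of the operator $\tilde{F}=I-K:Y_2\to Y_2$ via the identification $F[v]=\tilde{F}[v-v(0),v(0)]$, so injectivity of $F$ is equivalent to injectivity of $\tilde{F}$. The operator $K$ has a finite-rank second component, and its first component is an integral operator with piecewise continuous kernel mapping into functions that are equicontinuous on each subinterval $[t_{j-1},t_j]$; an Arzela--Ascoli argument applied component-wise on the mesh shows $K$ is compact on $Y_2$. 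Since we have already shown $F$ is surjective, $\tilde{F}$ is surjective, and the Fredholm alternative then forces $\tilde{F}$ to be injective, hence $F$ is injective.

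With both injectivity and surjectivity in hand, Lemma \ref{inverse_lemma} delivers $\|F^{-1}\|\leq \|H\|/(1-\alpha)$. The only remaining bookkeeping is to confirm that the computable sums (\ref{I-FH_bound2_1}) and (\ref{I-FH_bound2_2}) really majorize $\|I-FH\|$; this follows from the explicit formula for $(I-FH)[r,w]$ derived in Appendix \ref{app:I-FH} by breaking the integrals at the mesh nodes, applying the Taylor residual representation $A\tilde{\Phi}-\tilde{\Phi}'=R_j\tilde{\Phi}_{j-1}(t-t_{j-1})^m$ (and its analog for $\tilde{G}$) from Appendix \ref{app:taylor}, and using submultiplicativity of the induced matrix norms together with the identities defining $E_j$, $F_j$. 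The main obstacle I anticipate is the compactness step: one must be careful that the piecewise-continuous setting of $Y_1$ does not obstruct the Arzela--Ascoli argument, but since the kernel of $K$ is bounded and its image lies in the continuous functions on each mesh subinterval, equicontinuity holds subinterval by subinterval and the direct-sum structure of $Y_1$ makes the conclusion routine.
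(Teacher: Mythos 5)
Your proposal is correct and follows essentially the same route as the paper: the theorem is a summary of the preceding development, and the paper's argument is exactly the assembly you describe---surjectivity of $F$ from $\|I-FH\|<1$ via Lemma \ref{inverse_lemma}, compactness of $K$ and the Fredholm alternative applied to $\tilde F = I-K$ (section \ref{sec:fred_alt}) to upgrade surjectivity to injectivity, and then the bound $\|F^{-1}\|\leq\|H\|/(1-\alpha)$ from Lemma \ref{inverse_lemma}, with the computable majorants (\ref{I-FH_bound2_1})--(\ref{I-FH_bound2_2}) and (\ref{H_bound}) coming from the explicit $I-FH$ formula of Appendix \ref{app:I-FH} and the Taylor-residual expressions of Appendix \ref{app:taylor}. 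Your caution about Arzel\`a--Ascoli in the piecewise-continuous setting is handled exactly as you suggest, subinterval by subinterval via the direct-sum structure of $Y_1$.
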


Finally, we note that, if the $|\Psi_i|$ are not too large, then bounds with $O(N)$ complexity in $N$ can be obtained by using (\ref{G_ij_def}) and (\ref{G_pm_def}) to split the $i$ and $j$ dependence into separate terms.

\section{Validated Solutions to  Inhomogeneous Linear BVPs}\label{sec:inhomogeneous}
For  linear BVPs it is useful replace the Newton-Kantorovich theorem with a simpler and more explicit a posteriori error bound. Suppose we have proven $\|I-FH\|\leq\alpha <1$ so that $F$ is invertible.  Let $v$ be the exact solution of $F[v]=(r,w)$ and suppose we are given  an approximate solution $\tilde v\in Y_1$.  Then by (\ref{F_inv_bound}),
\begin{equation}\label{inhomo_err}
\|v-\tilde v\|=\|F^{-1}[(r,w)-F[\tilde v]]\|\leq \|F^{-1}\|\|F[\tilde v]-(r,w)\|\leq\frac{\|H\|}{1-\alpha}\|F[\tilde v]-(r,w)\|.
\end{equation}

 We suppose that on each subinterval $[t_{j-1},t_j]$ of the mesh, $A(t)$ is $C^m$ and $r(t)$ is $C^{m+1}$.  They then have Taylor series with remainders
\begin{align*}
A_i(t)=&\sum_{l=0}^{m-1}A_i^l (t-t_{i-1/2})^l+A^m_i(t)(t-t_{i-1/2})^m\equiv \tilde A_i(t)+A^m_i(t)(t-t_{i-1/2})^m\\
 r_i(t)=&\sum_{l=0}^{m}r_i^l (t-t_{i-1/2})^l+r_i^{m+1}(t)(t-t_{i-1/2})^{m+1}\equiv\tilde r_i(t)+r_i^{m+1}(t)(t-t_{i-1/2})^{m+1}.
\end{align*}
Similar to the construction of $\tilde \Phi$ and $\tilde G$ in \ref{app:taylor}, we suppose that $\tilde v(t)\in Y_1$ is defined from a set of approximations, $\tilde v_i$, to $v(t_{i-1/2})$ as follows:
\begin{equation*}
\tilde  v_j(t)=\sum_{l=0}^m c_j^l(t-t_{j-1/2})^l, \hspace{2mm} c_j^0=\tilde v_{j},\hspace{2mm} c_j^{l}=r_j^{l}+\frac{1}{l}\sum_{i=0}^{l-1}A_j^i c_j^{l-i-1}\text{ for } l=1,...,m.
\end{equation*}

 The residual in (\ref{inhomo_err}) can then be bounded as follows
\begin{align}\label{forward_error}
&\|F[\tilde v]-(r,w)\|\\
=&\max\bigg\{ |B_0\tilde v_1(0)+B_1\tilde v_N(1)-w|,\max_j \sup_{t\in[t_{j-1},t_j]}\left|\int_{t_{j-1}}^{t} \tilde v_j^\prime(s)-\tilde A_j(s)\tilde v_j(s)-\tilde r^\prime_j(s)ds\right.\notag\\
&+\tilde v_j(t_{j-1})-\tilde r_j(t_{j-1})-\tilde v(0)-\sum_{k=1}^{j-1}\int_{t_{k-1}}^{t_k} \tilde A_k(s)\tilde v_k(s)ds-r_j^{m+1}(t)(t-t_{j-1/2})^{m+1}\notag\\
&\left.-\int_{t_{j-1}}^{t} A^m_j(s)\tilde v_j(s)(s-t_{j-1/2})^mds-\sum_{k=1}^{j-1}\int_{t_{k-1}}^{t_k} A^m_k(s)\tilde v_k(s)(s-t_{k-1/2})^mds \right|\bigg\}.\notag
\end{align}

The integrals in \req{forward_error} are either integrals of polynomials, and thus can be integrated exactly, or we can compute interval-vector enclosures of them using the lemmas in \ref{app:int_bound}. We note that the cost is just $\mathcal{O}(N)$ as opposed to the $\mathcal{O}(N^2)$ cost of bounding $\|F^{-1}\|$.  

We also give the more explicit, but generally looser, bound 
\begin{align}
&\|F[\tilde v]-(r,w)\|\\
=&\max\bigg\{ |B_0\tilde v_1(0)+B_1\tilde v_N(1)-w|,\notag\\
&\max_j \sup_{t\in[t_{j-1},t_j]}\left|\sum_{k=1}^{j-1}\left(\tilde v_{k+1}(t_{k})-\tilde v_k(t_k)+\tilde r_k(t_k)-\tilde r_{k+1}(t_{k})\right)-r_j^{m+1}(t)(t-t_{j-1/2})^{m+1}\right.\notag\\
&\left.+\sum_{k=1}^{j-1}\int_{t_{k-1}}^{t_k}\tilde v_k^\prime(s) -A_k(s)\tilde v_k(s)-\tilde r_k^\prime(s)ds+\int_{t_{j-1}}^{t} \tilde v_j^\prime(s)-A_j(s)\tilde v_j(s)-\tilde r_j^\prime(s)ds\right|\bigg\}\\
\leq&\max\bigg\{ |B_0\tilde v_1(0)+B_1\tilde v_N(1)-w|,\label{forward_error2}\\
&\sum_{k=1}^{N-1}\left|\tilde v_{k+1}(t_{k})-\tilde v_k(t_k)+\tilde r_k(t_k)-\tilde r_{k+1}(t_{k})\right|+\max_k\{(h_k/2)^{m+1}\sup_{t}|r_k^{m+1}(t)|\}\notag\\
&+\frac{1}{(m+1)2^{m}} \sum_{k=1}^{N}h_k^{m+1}\sup_t |\hat R_k(t)|\bigg\}\notag
\end{align}
where
\begin{align}
\hat R_k(t)=\sum_{j=0}^{m}(t-t_{k-1/2})^{j}  \sum_{l=0}^{m-j} A_k^{l+j}(t) c_k^{m-l}.
\end{align}

\section{Validated Solutions to Nonlinear BVPs}\label{sec:nonlin}
As mentioned previously, the Newton Kantorovich theorem \req{Newton} allows one to apply Theorem \ref{main_theorem} to prove existence of solutions to nonlinear BVPs.  In this section, we discuss this in more detail.

As before consider a 2-point BVPs on $[0,1]$, \req{nonlin_bvp}, and define the operator $G:Y_1\rightarrow Y_2$ as in \req{G_def}.  Recall that $G$ is a $C^1$ operator and solutions of the BVP are equivalent to zeros of $G$.

Let $\tilde y$ be the approximate solution on the mesh $\{t_i\}_{i=0}^N$. We assume that for each $i$, the restriction  to the subinterval $[t_{i-1},t_i]$, denoted $y_i(t)$, is a polynomial,
\begin{align}
\tilde y_i(t)=\sum_{k=0}^m y_{i,k}(t-t_{i-1/2})^k.
\end{align}
In practice, we choose the $y_{i,k}$ so that the ODE is satisfied up to degree $m-1$, but the bounds we present below don't depend on the precise definition of the $y_{i,k}$'s.

Our first ingredient for the Newton Kantorovich theorem is a bound on $\|G(\tilde y)\|=\max\{\|G(\tilde y)_1\|, |G(\tilde y)_2|\}$, where
\begin{align}
\|G(\tilde y)_1\|=&\max_i \sup_{t\in[t_{i-1},t_i]} \bigg|  y_i(t_{i-1})-y_1(0)- \sum_{j=1}^{i-1}\int_{t_{j-1}}^{t_j} f(s,y_j(s))ds+\int_{t_{i-1}}^t y_i^\prime(s)- f(s,y_i(s))ds\bigg|,\notag\\
\|G(\tilde y)_2\|=&|g(y_1(0),y_N(1))|.
\end{align}

Assuming that we have an interval extension of $g$, a verified bound on $|g(y_1(0),y_N(1))|$ is simple to compute.  As for the first component, assuming that $f$ is $C^m$, we obtain the following by employing the Taylor expansion with remainder
\begin{align}
&f(t,y_i(t))=\tilde f_i(t)+R_i^m(t)(t-t_{i-1/2})^m,
\end{align}
where we define
\begin{align}
\tilde f_i(t)\equiv \sum_{l=0}^{m-1}\sum_{|\delta|\leq m-1-l}\frac{1}{l!\delta!}\partial_t^l\partial_y^\delta f(t_{i-1/2},y_{i,0})(t-t_{i-1/2})^l\left(\sum_{k=1}^m y_{i,k}(t-t_{i-1/2})^k\right)^\delta
\end{align}
and
\begin{align}
R_i^m(t)\equiv &\sum_{l=0}^m\sum_{|\delta|=m-l} \frac{m}{l!\delta!} \left(\sum_{k=1}^m y_{i,k}(t-t_{i-1/2})^{k-1}\right)^\delta\\
&\times \int_0^1 (1-s)^{m-1}\partial_t^l\partial_y^\delta f\left(t_{i-1/2}+s(t-t_{i-1/2}),y_{i,0}+s\sum_{k=1}^m y_{i,k}(t-t_{i-1/2})^k\right)ds.\notag
\end{align}
Here, $\delta$ denotes a multi-index of length $n$.

With this we can write
\begin{align}
&\|G(\tilde y)_1\|\\
=&\max_i \sup_{t\in[t_{i-1},t_i]} \bigg| \sum_{j=1}^{i-1}\left(y_{j+1}(t_{j})- y_j(t_j)\right)+\sum_{j=1}^{i-1}\int_{t_{j-1}}^{t_j}y_j^\prime(s)-\tilde f_j(s)ds+\int_{t_{i-1}}^ty_i^\prime(s)- \tilde f_i(s)ds\notag\\
&- \sum_{j=1}^{i-1}\int_{t_{j-1}}^{t_j}R_j^m(s)(s-t_{j-1/2})^mds-\int_{t_{i-1}}^t R_i^m(s)(s-t_{i-1/2})^mds \bigg|.\notag
\end{align}

The integrals on the second line involve polynomials and can be evaluated explicitly, making an interval enclosure of these terms simple to obtain  (assuming that we have interval extensions for $f$ and its derivatives). The integrals on the third line can be enclosed by using the lemmas in  \ref{app:int_bound}.

Next we need a bound on $K$, the Lipschitz constant for $DG$, over a neighborhood $\overline{B_\epsilon(\tilde y)}$.  As before, we will only concern ourselves with diagonal weight matrix, $W=\text{diag}(w)$.  In this case, we have the bound
 \begin{align}
K\leq&\max\bigg\{ \max_i  \max_\alpha \sup\sum_{k,l} |W^\alpha_\alpha  (\partial_{y^k}\partial_{y^l}f)^\alpha([t_{i-1},t_i], y_i( [t_{i-1},t_i])+ C^w_\epsilon)(W^{-1})^k_k  (W^{-1})^l_l|,\notag\\
&\sum_{\delta,\gamma=1}^2    \max_\alpha \sup \sum_{k,l} |W^\alpha_\alpha ( \partial_{y_\delta^k}\partial_{y_\gamma^l}g)^\alpha(y_1(0)+ C^w_\epsilon, y_N(1)+C^w_\epsilon)(W^{-1})^k_k  (W^{-1})^l_l|\bigg\},\notag\\
C^w_\epsilon\equiv& \prod_i [-\epsilon/w_i, \epsilon/w_i].
\end{align}
In the second line, $y_1^j$ refers to the components of the first argument of $g$ and $y_2^j$ to the second.

All that remains is to prove invertibility and bound the inverse of $DG(\tilde y)$ using Theorem \ref{main_theorem}.  To do this, we need to break $A(t)\equiv D_yf(t,\tilde y(t))$ into a polynomial part and remainder on each subinterval.  This can be done by using the Taylor series with remainder once more
\begin{align}
A_i(t)=&D_yf(t,y_i(t))\\
=&\sum_{l=0}^{M-1}\sum_{|\delta|\leq M-1-l}\frac{1}{l!\delta!}\partial_t^l\partial_y^\delta D_yf(t_{i-1/2},y_{i,0})(t-t_{i-1/2})^l\left(\sum_{k=1}^m y_{i,k}(t-t_{i-1/2})^k\right)^\delta\notag\\
&+(t-t_{i-1/2})^M\sum_{l=0}^M\sum_{|\delta|=M-l}\frac{m}{l!\delta!}\left(\sum_{k=1}^m y_{i,k}(t-t_{i-1/2})^{k-1}\right)^\delta\notag\\
&\times\int_0^1 (1-s)^{m-1}\partial_t^l\partial_y^\delta D_yf\left(t_{i-1/2}+s(t-t_{i-1/2}),y_{i,0}+s\sum_{k=1}^m y_{i,k} (t-t_{i-1/2})^k\right)ds.\notag
\end{align}
Again, $\delta$ denotes a multi-index of length $n$.  Note that we allow $M$ and $m$ to be unrelated, with the only constraint being sufficient smoothness of $f$.

The polynomial part of $A_i(t)$ is then obtained by computing the interval enclosures of the polynomial coefficients up to degree $M-1$ .  As for the higher order terms, after taking out a common factor of $(t-t_0)^{M}$, we compute a matrix-interval enclosure over $[t_{i-1},t_i]$ for inclusion in the remainder term. The integral terms are also incorporated into the remainder using  \ref{app:int_bound}
\begin{align}
&\int_0^1 (1-s)^{m-1}\partial_t^l\partial_y^\delta D_yf\left(t_{i-1/2}+s(t-t_{i-1/2}),y_{i,0}+s\sum_{k=1}^m y_{i,k} (t-t_{i-1/2})^k\right)ds\notag\\
&\times \left(\sum_{k=1}^m y_{i,k}(t-t_{i-1/2})^{k-1}\right)^\delta\notag\\
\in &\frac{1}{m}\partial_t^l\partial_y^\delta D_yf\left([t_{i-1},t_i],\sum_{k=0}^m y_k [-\Delta t_i,\Delta t_i]^k\right) \left(\sum_{k=1}^m y_{i,k}[-\Delta t_i,\Delta t_i]^{k-1}\right)^\delta,
\end{align}
where $\Delta t_i=t_i-t_{i-1/2}$.    

These are all the pieces needed to use Theorem \ref{Newton} to prove existence of a zero of $G$ close to $\tilde y$.

\section{Numerical Tests}\label{sec:num_tests}

In (\ref{I-FH_bound2_1}) and (\ref{H_bound}) bounds were given on $\|I-FH\|$ and $\|H\|$ that involve finitely representable objects and a finite number of arithmetic operations with which we can perform validated computations using interval arithmetic.  For rigorous bounds, we implemented these formulas using the MATLAB interval analysis package INTLAB \cite{Ru99a}; the code that produces the desired bound on $\|I-FH\|$ is a simple translation of (\ref{I-FH_bound2_1}) and (\ref{I-FH_bound2_2}).

In this section, the applicability and accuracy of the BVP method is assessed using a pair of singularly perturbed test problems, taken from \cite{Lee}, as well as a nonlinear example, the existence of a periodic orbit in the Lorenz system.

To obtain usable bounds, even the exponentially small parts of the approximate fundamental solution need to be resolved with the same relative accuracy as the larger portions, otherwise the loss of accuracy will prevent the growing modes from being annihilated when forming the approximate Green's function.  For problems with quickly decaying modes, the absolute tolerance of the non-rigorous solver must be set near the underflow limit in order to achieve this.  This is done in the first two test problems.

 Tests were performed  using a uniform mesh. There is likely room for improvement by intelligently selecting the mesh but we did not employ such methods here. In table \ref{table:tests} the mesh size was chosen by adding points until the error approximately stabilized. 

\subsection{Example 1: Turning Point}
Our first test problem is a singularly perturbed Airy equation
\begin{equation*}
\epsilon v^{\prime\prime}-(t-1/2) v=0, \hspace{2mm} v(0)=v(1)=1,
\end{equation*}
which is a model of the quantum mechanical wave function near a turning point in the potential, here at $t=1/2$. The exact solution is a linear combination of Airy functions and is shown in figure \ref{fig:airy_sol}, where $\epsilon=10^{-6}$.  The solution exhibits dense oscillations for $t<1/2$ and a boundary layer near $t=1$.   

We employed the BVP method in the cases $\epsilon=10^{-4},10^{-5},10^{-6}$. We used $m=10$ with the aim of making the Taylor series and approximate inverse errors negligible, allowing us to focus on how the error in the initial approximation translates into the bound produced by the method. 

We define the weight matrix $W$ so that it balances the different scales of the jump terms.  More specifically, our strategy for linear problems is to use a diagonal $W$ with entries chosen so that
\begin{equation*}
W^i_i\sum_{k=1}^{N-1}|(\tilde y_{k+1}(t_{k})-\tilde y_k(t_k)+\tilde r_k(t_k)-\tilde r_{k+1}(t_{k}))^i|=\text{constant}, \hspace{2mm}\max_i W_i^i=1,
\end{equation*}
where $|\cdot|$ denotes  the absolute value. For this and the next test problem, $y=(v,v^\prime)$ and $r=0$.  The above choice of weight prevents overestimation of the error in certain components when they have widely different scales and can significantly improve the computed bounds, up to several orders of magnitude in our tests; it was often the difference between the existence test $\|I-FH\|<1$ passing or failing. Since the jump errors can be computed in $\mathcal{O}(N)$ operations prior to the main $\mathcal{O}(N^2)$ computation, this is an effective and practical way to improve the estimates. 

The test results are shown  in table  \ref{table:tests}. The fourth column gives an estimate of the (unweighted) error for the first component of the solution at the center of the mesh intervals, $v_i$, that are input into the BVP method.  The approximate solution was generated by the matlab bvp5c routine with relative error tolerance of $10^{-6}$. The fifth column is a bound on $\|I-FH\|$ computed by the BVP method, and the final column is the computed  bound on the absolute value of the (unweighted) error of the first solution component, $v$.    

The a posteriori BVP method does overestimate the error by several orders of magnitude, but the error bounds are still practical. The IVP fundamental solution method would not be usable in this situation, as the fundamental solution matrix has a norm $|Y(1)|\approx 10^{12}$  for $\epsilon=10^{-4}$. As an illustration of the usefulness of the weight, without it, the error bound for $\epsilon=10^{-6}$ is weakened from $1.8\times 10^{-4}$ to $44$.

The BVP method fails on this problem for $\epsilon=10^{-7}$, as $\Phi$ becomes so ill conditioned that its decaying components cannot be resolved to sufficient relative accuracy due to underflow. This point will reappear in the subsequent example as well; the exponentially decaying modes of $\Phi$ (which tend to be the least `interesting' features) cause the most trouble for the BVP method. In other words, while a large norm for the inverse fundamental solution does not impact BVP stability, over/underflow issues mean that it is still the main limiting factor of the BVP method in its current form. However, the problem is much less severe than ill conditioning in the IVP method.

\begin{figure}
\centerline{\includegraphics[height=6cm]{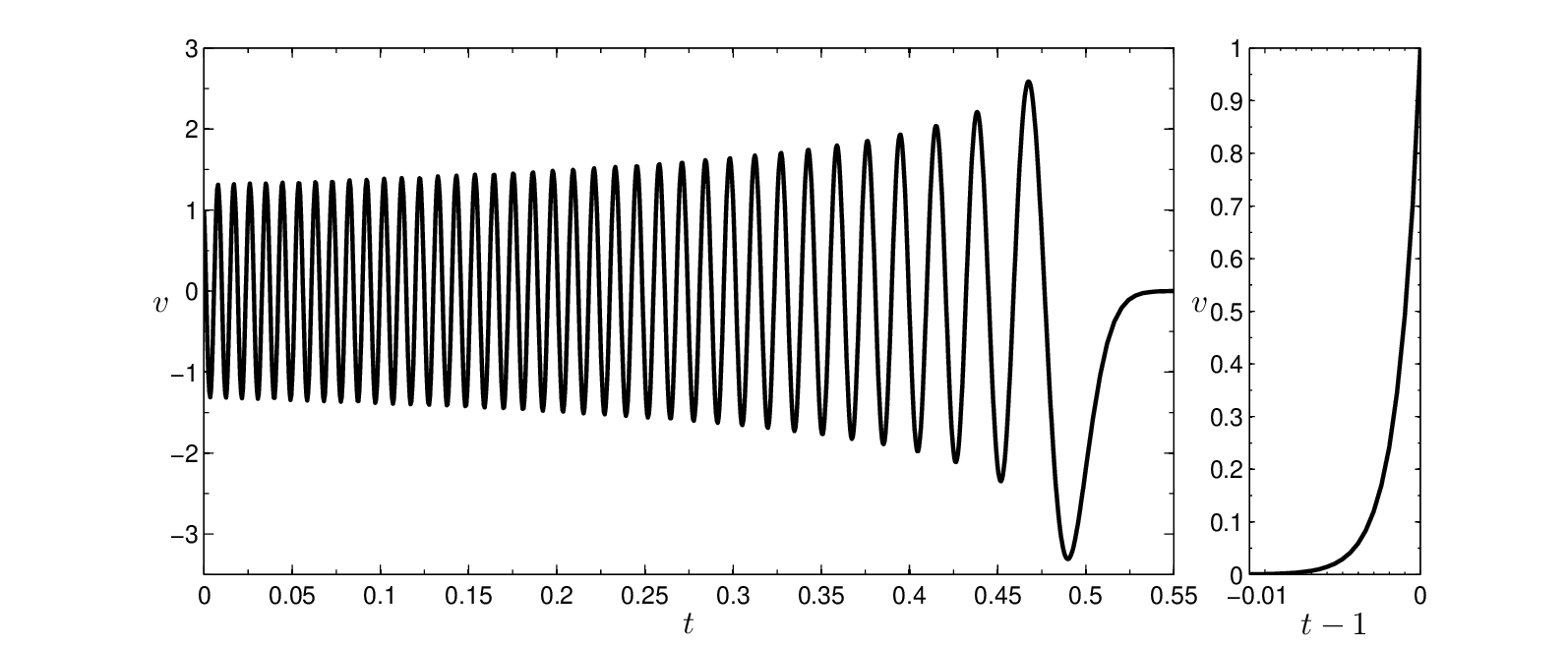}}
\caption{Exact solution to the turning point problem for $\epsilon=10^{-6}$.  For $t<1/2$ the system is oscillatory (left) and it has a boundary layer near $t=1$ (right).}\label{fig:airy_sol}
\end{figure}
\begin{table}
\centering
\begin{tabular}{ |c |c |c |c|c|c|}
\hline
 \rule{0pt}{2.3ex}   Example & $\epsilon$ & N & input error &$\|I-FH\|$& solution error bound \\
\hline
\rule{0pt}{2.5ex}     Turning Point &$10^{-4}$& $150$&$9.8\times 10^{-10}$ &$ 1.1\times 10^{-6}$&$1.2\times 10^{-7}$\\
\hline
\rule{0pt}{2.5ex}      &$10^{-5}$& $250$  &$6.2\times 10^{-8}$ & $3.6\times 10^{-5}$ & $4.2\times 10^{-5}$\\
\hline
\rule{0pt}{2.5ex}      &$10^{-6}$ & $350$  & $1.3\times 10^{-6} $& $2.6\times 10^{-3}$&$1.8\times 10^{-4}$\\
\hline
\rule{0pt}{2.5ex}     Potential Barrier  & $10^{-5}$ & $150$ & $1.1\times 10^{-6}$ &$9.7\times 10^{-5}$ & $2.9\times 10^{-3}$\\
\hline
\rule{0pt}{2.5ex}        & $10^{-6}$ & $350$ & $ 3.0\times 10^{-8}$ &$8.1\times 10^{-6}$ & $1.6\times 10^{-6}$\\
\hline
\end{tabular}
\vspace{.3mm}
\caption{Test results for the BVP method on the example problems.}\label{table:tests}
\end{table}

\subsection{Example 2: Potential Well}
As a second test,  consider a quantum mechanical potential well problem
\begin{equation*}
\epsilon v^{\prime\prime}+((t-1/2)^2-\omega^2)v=0,\hspace{2mm} v(0)=1,\hspace{2mm} v(1)=2
\end{equation*}
where we take $\omega=1/4$.   The solution is oscillatory on $[0,1/4]\cup[3/4,1]$ and exponentially decaying as one moves towards the center of $[1/4,3/4]$ as seen in figure \ref{fig:potential} (right pane) for $\epsilon=10^{-6}$ and $m=10$. The definition of $W$ and the meaning of the table entries is the same as in the prior example. Again, the method fails once the decaying modes start to underflow, which occurs for $\epsilon=10^{-7}$.

\begin{figure} 
\centerline{\includegraphics[height=6cm]{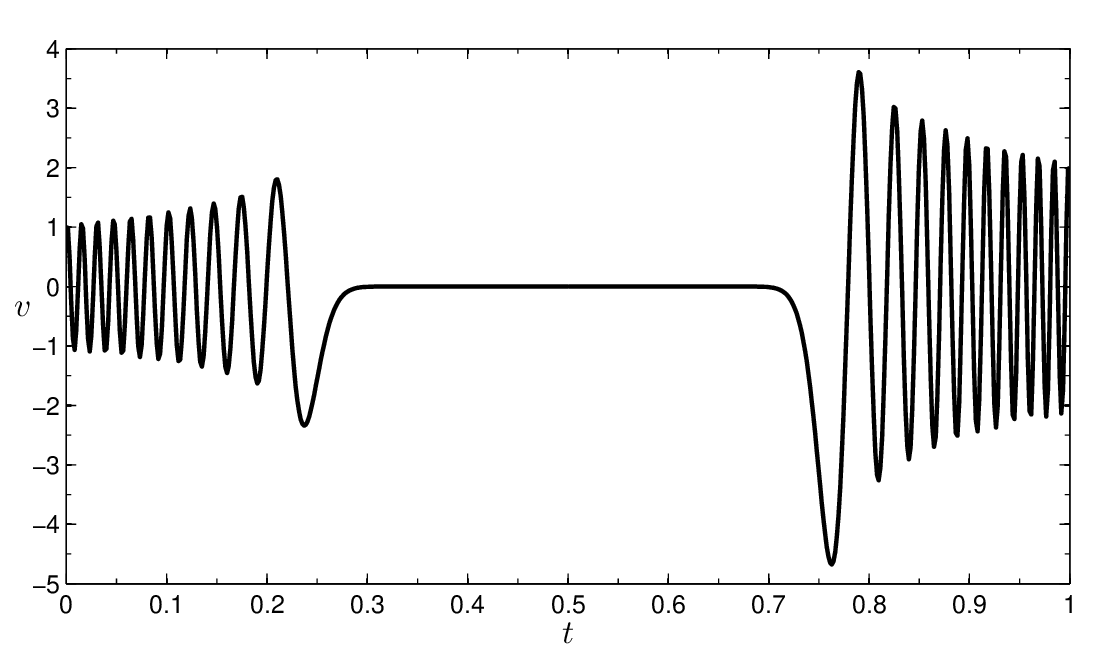}}
\caption{Solution to the potential problem for $\epsilon=10^{-6}$ (right).}\label{fig:potential}
 \end{figure}

 In the first  example, the supremum norm of the exact solution was relatively consistent between the different parameter values, but that is not true for the current example, where $\|v\|\approx 300$ for $\epsilon=10^{-5}$ and is single digits $\epsilon=10^{-6}$.  This helps explain why the absolute error is smaller for the smaller value of $\epsilon$.

\subsection{Example 3: Lorenz System}
As our final example, we study a non-linear problem, the existence of periodic solutions to the Lorenz system.  Scaling the independent variable by the (unknown) period $T$, the BVP is 
\begin{align*}
x^\prime=&T\sigma(y-x),\hspace{2mm} y^\prime=T(x(\rho-z)-y),\hspace{2mm} z^\prime=T(xy-\beta z),\hspace{2mm} T^\prime=0
\end{align*}
\begin{equation*}
x(0)=x(1),\hspace{2mm} y(0)=y(1),\hspace{2mm} z(0)=z(1).
\end{equation*}
In order to fix the initial point on the periodic orbit an additional boundary condition is needed, which we take to be
\begin{equation}\label{phase_cond}
x(0)=y(0), \text{ i.e. }x^\prime(0)=0.
\end{equation}
Let $f,g:\mathbb{R}^4\rightarrow\mathbb{R}^4$ denote the vector field and boundary conditions respectively and $G$ be the corresponding BVP operator, defined as in (\ref{G_def}).

In \cite{Coomes,radii_poly}, computer aided proof techniques were used to show existence of periodic orbits to the Lorenz system.  In particular, in~\cite{Coomes} existence was shown for parameter values
\begin{equation*}
\sigma=10,\hspace{2mm} \beta=8/3,\hspace{2mm} \rho=28
\end{equation*}
and initial conditions and period $T$ close to
\begin{equation*}
x(0)\approx-12.78619,\hspace{2mm}y(0)\approx-19.36419,\hspace{2mm}z(0)\approx24,\hspace{2mm} T\approx1.559.
\end{equation*}
Note that our initial point on the orbit, defined by (\ref{phase_cond}), differs from the above initial conditions. We will test our method on this same orbit, shown in the $x$-$y$ plane in figure~\ref{fig:Lorenz_orbit}.\\
\begin{figure}
\centerline{\includegraphics[height=6cm]{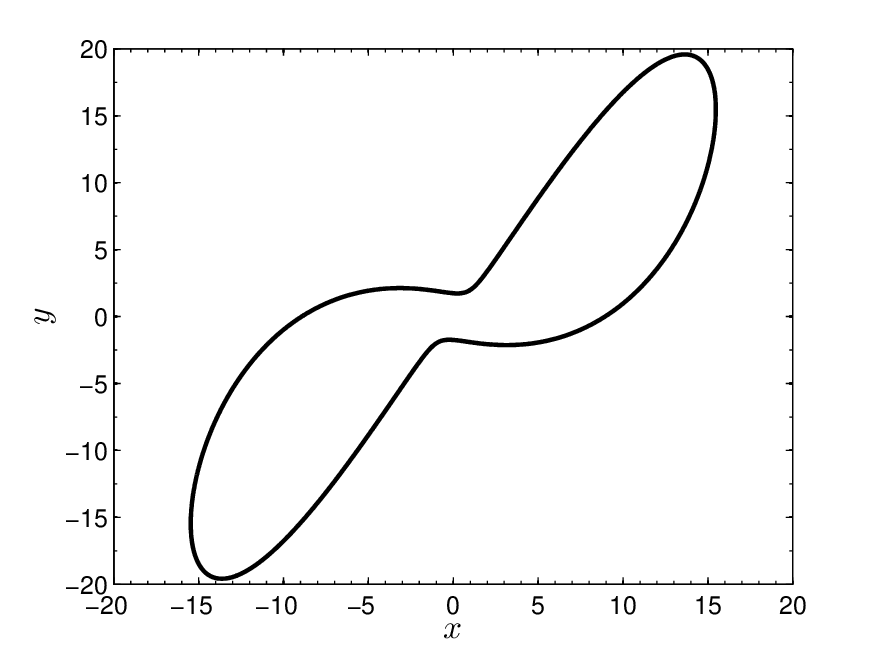}}
\caption{Periodic orbit for the Lorenz system, projected onto the $x$-$y$ plane.}\label{fig:Lorenz_orbit}
\end{figure}

A Taylor series approximation to the solution centered at $t_{i-1/2}$ can be derived
\begin{equation*}
x_i(t)=\sum_{j=0}^m x_{i,j} (t-t_{i-1/2})^j,\hspace{2mm} y_i(t)=\sum_{j=0}^m y_{i,j} (t-t_{i-1/2})^j,\hspace{2mm} z_i(t)=\sum_{j=0}^m z_{i,j} (t-t_{i-1/2})^j,
\end{equation*}
\begin{align*}
x_{i,r+1}=&\frac{1}{r+1}T\sigma\left( y_{i,r} - x_{i,r}\right),\hspace{2mm}y_{i,r+1}=\frac{1}{r+1} T\left(\rho x_{i,r}-y_{i,r} -\sum_{j=0}^r x_{i,j} z_{i,r-j}\right),\hspace{2mm}\\
z_{i,r+1}=&\frac{1}{r+1}T\left(\sum_{j=0}^rx_{i,j} y_{i,r-j}-\beta z_{i,r}\right),
\end{align*}
where $x_{i,0}$, $y_{i,0}$, $z_{i,0}$, and $T$ are obtained from numerical approximations to the periodic orbit at the centers of the mesh intervals $[t_{i-1},t_i]$.

Using Section \ref{sec:nonlin}, we can compute upper bounds on the values of the parameters $\beta$, $K$, $\eta$ that appear in the Newton-Kantorovich theorem, see  \ref{newton_th}. 

  Denote the piecewise polynomial approximate solution by $\tilde Y$, let $F=DG(\tilde Y)$, and $H$ be its approximate inverse as in Theorem \ref{main_theorem}. Using $W=I$, $m=15$ and $M=5$, an equally spaced mesh of size $N=50$, and a domain $D=B_\epsilon(\tilde Y)$, $\epsilon\approx 5.1\times 10^{-7}$, our method provides a rigorous existence proof with the following parameters
\begin{align*}
\|G(\tilde Y)\|\leq& 4.0\times 10^{-10},\hspace{2mm} \|I-FH\|\leq 0.47,\hspace{2mm} \|F^{-1}\|\leq 6.6\times 10^2,\\
 K\leq  &79,\hspace{2mm} s_0\leq 2.6\times 10^{-7}, \hspace{2mm} s_1\geq 5.1\times 10^{-7}\notag.
\end{align*}
In other words, a solution exists within an $L^\infty$-norm ball of radius $s_0$ about the approximate solution and the solution is unique within the ball of radius $s_1$.

Again, the verified computation was done in MATLAB using the interval arithmetic package INTLAB \cite{Ru99a}. On a 2.2GHz Intel Core i5 the procedure took 150 seconds, including the time to compute the approximate solution $\tilde Y$ and approximate BVP fundamental solution $\tilde\Phi$.  The approximations were computed using the MATLAB routines ode15s and bvp5c.

\section{Conclusion}
We have presented an a posteriori method for proving existence to and deriving rigorous error bounds for two point boundary value problems on a compact interval, summarized in Theorem \ref{main_theorem}. The method applies to general $n$-dimensional systems without any assumption on the form of the vector field.  It uses a (non-rigorous) approximation to the Green's function of the BVP to generate the bounds and the bounds can be evaluated using interval arithmetic for mathematically rigorous results or in traditional floating point for faster but approximate results.  Because the method is based on the Green's function of a BVP rather than the fundamental solution to an IVP, the method is applicable to cases where the BVP is stable but the corresponding IVP is unstable. An adaptively chosen weight matrix was used in some of the norms and was found to improve the quality of the final $L^\infty$ error bounds by several orders of magnitude in many of the tests.

 In Section \ref{sec:num_tests} we tested the BVP method on a pair of singularly perturbed linear problems that exhibit such problematic features as dense oscillation and boundary layers.  The method is successful in proving existence and producing reasonable and usable error bounds, even when the relevant perturbation parameter becomes moderately small. We have also successfully applied it to rigorously prove the existence of a periodic orbit in the Lorenz system.  

In the cases tested, the main limiting factor on the success of the method is the extreme ill conditioning of the approximate BVP fundamental solution $\Phi$ as the parameter that controls the singular perturbation is made smaller.  Although $\Phi$ itself has moderate sized norm in the tests,  it becomes nearly singular.  The BVP method requires computation of the fundamental solution and its inverse to sufficient {\em relative accuracy}; poorly resolved decaying modes will spoil the method and the method always fails once the underflow limit is reached.  This commonly occurs for very stiff problems and so, while the BVP method works for moderately stiff problems as we have shown, it fails in the very stiff limit.   This issue could likely  be addressed in a brute force manner by employing an interval or floating point arithmetic package that allows for exponent values of substantially larger magnitude than are allowed in standard floating point arithmetic, but we did not employ such a technique here.

\appendix

\section{Newton-Kantorovich Theorem}\label{newton_th}
For convenience, in this appendix we state a version of the Newton-Kantorovich theorem that was  used above.
\begin{theorem}[Newton-Kantorovich Theorem]\label{Newton}
Let $X$, $Y$ be Banach spaces, $D\subset X$ be open and convex, $G:D\rightarrow Y$ be differentiable, and 
\begin{equation*}
\|DG(x)-DG(y)\|\leq K\|x-y\|
\end{equation*}
in $D$.  Let $\tilde y\in D$ and $DG(\tilde y)$ have bounded inverse $A$.

Suppose $\beta\geq \|A\|$, $\eta\geq \|AG(\tilde y)\|$, and $h\equiv \beta K \eta\leq 1/2$.  Set
\begin{equation*}
s_0=\frac{1}{\beta K}(1-\sqrt{1-2h}),\hspace{2mm} s_1=\frac{1}{\beta K}(1+\sqrt{1-2h}).
\end{equation*}
Suppose $S\equiv \overline{B_{s_0}(\tilde y)}\subset D$.  Then the Newton iteration is well defined, lies in $S$ and converges to $x\in S$, a solution of $G(x)=0$.  This is the unique solution in $D\cap B_{s_1}(\tilde y)$.  
\end{theorem}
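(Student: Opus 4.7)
The plan is to follow Kantorovich's classical majorant method: I will compare the Newton iterates in $X$ to the scalar Newton iterates applied to a quadratic polynomial whose roots are exactly $s_0$ and $s_1$. Define the Newton sequence $x_{n+1}=x_n - DF(x_n)^{-1}F(x_n)$ starting from $x_0=y_0$, and introduce the scalar polynomial
\begin{equation*}
p(t)=\tfrac{K}{2}t^2 - \tfrac{1}{\beta}t + \tfrac{\eta}{\beta},
\end{equation*}
whose discriminant is nonnegative precisely because $h=\beta K\eta\leq 1/2$, and whose roots are $s_0\leq s_1$. The scalar Newton iteration $t_{n+1}=t_n - p(t_n)/p'(t_n)$ with $t_0=0$ is easily shown by a direct calculation to be monotonically increasing and to converge to $s_0$.

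Next I would prove simultaneously by induction the three key estimates: (i) $x_n\in S$; (ii) $DF(x_n)$ has a bounded inverse with $\|DF(x_n)^{-1}\|\leq -1/p'(t_n)$; and (iii) $\|x_{n+1}-x_n\|\leq t_{n+1}-t_n$. For (ii) I would write $A\,DF(x_n)=I-A(DF(y_0)-DF(x_n))$ and use the Lipschitz hypothesis together with (i) to obtain $\|I-A\,DF(x_n)\|\leq \beta K t_n<1$, so that Lemma \ref{I_perturb_lemma} gives a Neumann-series bound equivalent to $-1/p'(t_n)$. For (iii), since $F(x_n)+DF(x_n)(x_{n+1}-x_n)=0$, I would write
\begin{equation*}
F(x_{n+1})=\int_0^1\bigl[DF(x_n+s(x_{n+1}-x_n))-DF(x_n)\bigr](x_{n+1}-x_n)\,ds,
\end{equation*}
so $\|F(x_{n+1})\|\leq \tfrac{K}{2}\|x_{n+1}-x_n\|^2$. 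The identity $p(t_{n+1})=\tfrac{K}{2}(t_{n+1}-t_n)^2$ for the scalar iteration, combined with (ii), then yields $\|x_{n+2}-x_{n+1}\|\leq -p(t_{n+1})/p'(t_{n+1})=t_{n+2}-t_{n+1}$, closing the induction. Estimate (i) at stage $n+1$ follows because $\|x_{n+1}-y_0\|\leq\sum_{k=0}^n(t_{k+1}-t_k)=t_{n+1}\leq s_0$.

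Estimate (iii) implies that $\{x_n\}$ is Cauchy (majorized term by term by the Cauchy sequence $\{t_n\}$), hence converges to some $x\in S$ since $S$ is closed. Passing to the limit in $F(x_n)+DF(x_n)(x_{n+1}-x_n)=0$ and using the uniform bound on $\|DF(x_n)^{-1}\|$ yields $F(x)=0$. For uniqueness on $D\cap B_{s_1}(y_0)$, suppose $F(z)=0$ with $z$ in that ball, and write
\begin{equation*}
0=F(z)-F(x)=\Bigl(\int_0^1 DF(x+s(z-x))\,ds\Bigr)(z-x).
\end{equation*}
I would show the averaged operator is invertible by a perturbation argument analogous to step (ii): the Lipschitz bound on $DF$ along the segment from $x$ to $z$ combined with the arithmetic relation $s_0+s_1=2/(\beta K)$ produces $\|I-A\int_0^1 DF(x+s(z-x))\,ds\|<1$, forcing $z=x$.

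The main obstacle is the simultaneous induction in step two: one has to arrange the bookkeeping so that the scalar identities for $p$ align exactly with the Banach-space estimates, particularly the identity $p(t_{n+1})=\tfrac{K}{2}(t_{n+1}-t_n)^2$ and the monotonicity of $-1/p'(t_n)$. The uniqueness argument on the larger ball $B_{s_1}(y_0)$ is also delicate because it requires the sharper arithmetic relation between $s_0$ and $s_1$ rather than the crude Lipschitz bound used in the existence half.
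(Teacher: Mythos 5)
The paper offers no proof of this statement: Appendix \ref{newton_th} simply records the theorem as a known result (citing \cite{kantorovich1964functional}), so there is no in-paper argument to compare yours against. What you propose is the classical Kantorovich majorant-function proof, and it is sound. The quadratic $p(t)=\tfrac{K}{2}t^2-\tfrac{1}{\beta}t+\tfrac{\eta}{\beta}$ has roots exactly $s_0\leq s_1$ precisely when $h\leq 1/2$; the identities $-1/p'(t_n)=\beta/(1-\beta K t_n)$ and $p(t_{n+1})=\tfrac{K}{2}(t_{n+1}-t_n)^2$ (the latter because $p$ is quadratic with $p''\equiv K$ and $p(t_n)+p'(t_n)(t_{n+1}-t_n)=0$) make the simultaneous induction close exactly as you describe, with the base case $\|x_1-x_0\|=\|AF(y_0)\|\leq\eta=t_1-t_0$; and the uniqueness step works because $\|x-y_0\|\leq s_0$, $\|z-y_0\|<s_1$, the convexity of $D$, and $s_0+s_1=2/(\beta K)$ together give $\|I-A\int_0^1 DF(x+s(z-x))\,ds\|\leq \beta K(\|x-y_0\|+\|z-y_0\|)/2<1$. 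One small correction: to pass to the limit in $F(x_n)+DF(x_n)(x_{n+1}-x_n)=0$ you need a uniform bound on $\|DF(x_n)\|$ (immediate from the Lipschitz condition and $x_n\in S$), not on $\|DF(x_n)^{-1}\|$; the latter bound $-1/p'(t_n)$ in fact degenerates as $n\to\infty$ in the borderline case $h=1/2$, where $p'(s_0)=0$, whereas the former is what actually forces $F(x_n)\to 0$ and hence $F(x)=0$ by continuity.
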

In practice, we bound $\eta$ by bounding $\|A\|\|G(\tilde y)\|$.

\section{Computing $I-FH$}\label{app:I-FH}
This appendix outlines the derivation of a formula for $I-FH$.  The goal is decompose it into several terms, each of which obviously vanishes when $\tilde \Phi$ and $\tilde G$ are defined using the exact fundamental solution $\Phi$ and Green's function $G$, respectively.  The calculation is long but relatively straightforward.  We outline the key steps below.

 First, using the definitions of $H$ and $F$ and taking $t\not\in\{t_i\}_{i=0}^N$  the two components equal
\begin{align*}
&(I-FH)[r,w]_1(t)\\
=&\tilde\Phi(0)(w-B_1r(1))+\int_0^1 \tilde G(0,s)A(s)r(s)ds-\tilde\Phi(t)(w-B_1r(1))\\
&-\int_0^1 \tilde G(t,s)A(s)r(s)ds+\int_0^tA(s)\tilde\Phi(s)ds(w-B_1r(1))+\int_0^tA(s)r(s)ds\\
&+\int_0^tA(s)\int_0^1 \tilde G(s,z)A(z)r(z)dzds,\\
&(I-FH)[r,w]_2\\
=&[I-B_0\tilde\Phi(0)-B_1\tilde\Phi(1)](w-B_1r(1))-\int_0^1(B_0\tilde G(0,s) +B_1 \tilde G(1,s) )A(s) r(s) ds.
\end{align*}

One can check that 
\begin{align*}
&B_0 G(0,s)+B_1 G(1,s)=(-B_0\Phi(0)B_1\Phi(1)+B_1\Phi(1)B_0\Phi(0))\Phi^{-1}(s)\\
=&-(B_0\Phi(0)+B_1\Phi(1)-I)B_1\Phi(1)+B_1\Phi(1)(B_0\Phi(0)+B_1\Phi(1)-I).
\end{align*}
It is apparent that each term vanishes for the exact solution.

As for the first component, we introduce $\tilde\Phi^\prime$ and $\partial_t \tilde G(t,s)$ to obtain terms that vanish when the ODE is satisfied 
\begin{align*}
&(I-FH)[r,w]_1(t)\\
=&\tilde\Phi(0)(w-B_1r(1))+\int_0^1 \tilde G(0,s)A(s)r(s)ds-\tilde\Phi(t)(w-B_1r(1))-\int_0^1 \tilde G(t,s)A(s)r(s)ds\\
&+\int_0^t-\tilde\Phi^\prime(s)+A(s)\tilde\Phi(s)ds(w-B_1r(1))+\int_0^t\tilde\Phi^\prime(s)ds(w-B_1r(1))+\int_0^tA(s)r(s)ds\\
&+\int_0^t\int_0^1(-\partial_s \tilde G(s,z)+A(s) \tilde G(s,z))A(z)r(z)dzds+\int_0^t\int_0^1\partial_s \tilde G(s,z)A(z)r(z)dzds.
\end{align*}

Now integrate by parts, taking into account the discontinuities of $\tilde\Phi(s)$ and $\tilde G(s,t)$ at $s=t_i$ as well as $\tilde G(s,t)$ at $s=t$.  After substantial simplification, for $t\in[t_{i-1},t_i]$ we arrive at

\begin{align}\label{I_FH1_simp}
&(I-F H)[r,w]_{1}(t)\\
=&\sum_{j=1}^{i-1} \left(\tilde\Phi_j(t_j) - \tilde\Phi_{j+1}(t_{j}) \right)(w-B_1r(1))+\left(\int_{0}^{t} A(s) \tilde\Phi(s)-\tilde\Phi^\prime(s)ds\right)(w-B_1r(1))\notag\\
&+\int_{0}^{t} \int_0^1 (A(s)\tilde G(s,z)-\partial_s\tilde G(s,z)) A(z) r(z)dzds\notag\\
&+\sum_{j=1}^{i}\int_{t_{j-1}}^{\min\{t,t_j\}} (I-\tilde G_{j,j}^-(z,z)+\tilde G_{j,j}^+(z,z))A_j(z) r_j(z) dz\notag\\
&+\sum_{j=1}^{i-1} \sum_{k=1}^N\int_{t_{k-1}}^{t_k} ( \tilde G_{j,k}(t_j,z)- \tilde G_{j+1,k}(t_{j},z) )A_k(z) r_k(z)dz.\notag
\end{align}

The exact Greens function satisfies 
\begin{equation*}
G(t^+,t)-G(t^-,t)=\Phi(t)(B_1\Phi(1)+B_0\Phi(0))\Phi^{-1}(t)=I
\end{equation*}
and is continuous away from the diagonal.  The exact fundamental solution $\Phi$ is continuous as well.  Therefore, if $\tilde\Phi$ and $\tilde G$ are defined using the exact fundamental solution $\Phi$ and Green's function $G$ then the jump terms in (\ref{I_FH1_simp}) vanish.  The remaining terms are zero since $\Phi$ and $G$ satisfy the ODE.  Therefore we have succeeded in putting $I-FH$ into a form where each term is manifestly zero when using the exact solution.  This suggests that a reasonable bound on $\|I-FH\|$ might be obtained by bounding each of these terms individually, as done in Section~\ref{sec:sharper_bound}.

\section{Formula for $I-HF$ }\label{app:I-HF}
A similar procedure to  \ref{app:I-FH} yields a formula for $I-HF$.  For $t\in (t_{i-1},t_i)$
\begin{align*}
&(I-HF)[v](t)=\int_{0}^{1}\left(\partial_s \tilde G(t,s)+  \tilde G(t,s) A(s)\right) \left(v(0)+\int_0^s A(z)v(z)dz\right) ds\\
&+\left(I-\tilde G_{i,i}^-(t,t)+ \tilde G_{i,i}^+(t,t)\right) \left(v(0)+\int_0^{t} A(z)v(z)dz\right)\\
&+(\tilde G_{i,1}(t,0)-\tilde\Phi_i(t)B_0)v(0)-(\tilde G_{i,N}(t,1)+\tilde\Phi_i(t)B_1)\left(v(0)+\int_0^1A(s)v(s)ds\right)\\
&+\sum_{j=1}^{N-1}(\tilde G_{i,j+1}(t,t_j)-\tilde G_{i,j}(t,t_j))\left(v(0)+\int_0^{t_j} A(z)v(z)dz\right).
\end{align*}

 In practice we found that the bounds obtained using $I-FH$ to be tighter than those using $I-HF$ and so we have based the BVP method on $I-FH$, but the other choice could have been made and may be preferable in some cases.

\section{Approximate ODE Solution}\label{app:taylor}
As discussed above, the data for the BVP method consists of sets $\tilde\Phi_j$ and $\tilde G_{i,j}^{\pm}$ of approximations to the fundamental solution $\Phi$ and Green's function $G$ at the centers of the intervals $[t_{i-1},t_i]$ and rectangles $[t_{i-1},t_i]\times [t_{j-1},t_j]$ respectively.  Recall that $\tilde G_{i,j}^{+}$ refers to the upper triangular region and $\tilde G_{i,j}^{-}$ to the lower triangular region and that $\tilde G_{i,j}^{+}=\tilde G_{i,j}^{-}$ for $i\neq j$ and that we must have $\tilde G^+_{i,i}$ and $\tilde G^-_{i,i}$ for all $i$. 

The values at the nodes must be extended to $C^1$ functions on each interval or rectangle.  This is done by letting $P_i(t)$, $Q_i(t)$ be the matrix-valued polynomials of degree $m\geq 1$ that satisfies the ODEs
\begin{align*}
\frac{d}{dt}P_i(t)=&A_i(t)P_i(t), \hspace{2mm} P_i(t_{i-1/2})=I,\\
\frac{d}{dt}Q_i(t)=&-Q_i(t)A_i(t), \hspace{2mm} Q_i(t_{i-1/2})=I
\end{align*}
up to order $m-1$, where $t_{i-1/2}$ denotes the midpoint of $[t_{i-1},t_i]$.  The functions
\begin{equation*}
\tilde\Phi_i(t)=P_i(t)\tilde\Phi_i,\hspace{2mm} \tilde G^{\pm}_{i,j}(t,s)=P_i(t)\tilde G^{\pm}_{i,j}Q_j(s)
\end{equation*}
are then approximate solutions to the ODEs satisfied by $\Phi$ and $G$.  

  Recursive formulas for the coefficients are given below. We suppose the coefficient matrix $A_i(t)$ is $C^{m}$ (the subscript $i$ refers to the restriction to $[t_{i-1},t_i]$ and write the Taylor series with remainder about the midpoint $t_{i-1/2}$ as
\begin{equation*}
A_i(t)=\sum_{k=0}^{m-1} A_i^k (t-t_{i-1/2})^k+A_i^{m}(t)(t-t_{i-1/2})^{m}.
\end{equation*}
Define $P_i(t)$, $Q_i(t)$ on $[t_{i-1},t_i]$ by
\begin{align}
P_i(t)=\sum_{k=0}^m(t-t_{i-1/2})^kP^k_i,\hspace{2mm} P_i^0=I,\hspace{2mm} P_i^k=\frac{1}{k}\sum_{l=0}^{k-1}A_i^{l}P_i^{k-l-1},\label{Pj_def}\\
Q_i(t)=\sum_{k=0}^m(t-t_{i-1/2})^kQ_i^k,\hspace{2mm} Q_i^0=I,\hspace{2mm} Q_i^{k}=-\frac{1}{k}\sum_{l=0}^{k-1}Q_i^{k-l-1}A_i^{l}.\label{Qj_def}
\end{align}
  Note that these quantities do not involve the remainder term $A_i^{m}(t)$.

With these,  the ODE residuals are
\begin{align}
&A_i(t)P_i(t)-P_i^\prime(t)=(t-t_{i-1/2})^m\sum_{k=0}^{m} (t-t_{i-1/2})^k \sum_{l=0}^{m-k} A_i^{l+k} P^{m-l}_i\equiv R_i(t)(t-t_{i-1/2})^m,\label{P_ODE_resid}\\
&Q_i^\prime(t)+Q_i(t)A_i(t)=(t-t_{i-1/2})^m\sum_{k=0}^{m}(t-t_{i-1/2})^{k}  \sum_{l=0}^{m-k}Q_i^{m-l} A_i^{l+k}.
\end{align}

Our construction also ensures that $P_i(t)$ and $Q_i(t)$ are approximate inverses to one another
\begin{align}
P_i(t)Q_i(t)-I=(t-t_{i-1/2})^{m+1}\sum_{k=1}^{m}  (t-t_{i-1/2})^{k-1}  \sum_{l=0}^{m-k} P^{m-l}_iQ_i^{l+k}\equiv \tilde R_i(t)(t-t_{i-1/2})^{m+1}.\label{PQ_I_resid}
\end{align}

\section{Interval Enclosures of Integrals} \label{app:int_bound}

The following lemmas are useful for deriving verified bounds of integrals.  We provide a simple proof of the first to illustrate the idea.  The other is similar.
\begin{lemma}
Let $g:[a,b]\to [0,\infty)$ and $R:[a,b]\to \mathbb{R}^n$ both be integrable. In addition, suppose  we have an interval-vector enclosure, $R(t)\in[ x_1, x_2]$, where $x_i\in\mathbb{R}^n$.  Then 
\begin{align}
\int_a^b g(t)R(t) dt\in \int_a^b g(t)dt \cdot [{ x_1},{ x_2}].
\end{align}
\end{lemma}
\begin{proof}
The components of $R$ satisfy $x_1^i\leq R^i(t)\leq x_2^i$.  $g(t)$ is non-negative, therefore
\begin{align}
g(t)x_1^i\leq g(t) R^i(t)\leq g(t)x_2^i.
\end{align}
Hence
\begin{align}
\int_a^bg(t)dt \cdot x_1^i\leq \int_a^bg(t) R^i(t)dt\leq \int_a^bg(t)dt\cdot x_2^i,
\end{align}
which is equivalent to the claim.
\end{proof}

\begin{lemma}
Let $R:[t_0-\Delta t,t_0+\Delta t]\rightarrow\mathbb{R}^n$  have the interval enclosure $R(t)\in[ x_1, x_2]$ where $x_i$ are vectors and so the r.h.s. is an interval vector.  Then for any $m\geq 0$ and $t\in [t_0-\Delta t,t_0+\Delta t]$ we have
\begin{align}
\int_{t_0-\Delta t}^{ t} R(t) (t-t_0)^m dt= (-1)^m [x_1,x_2]\frac{[0,\Delta t]^{m+1}}{m+1}+[x_1 ,x_2]\frac{[0,\Delta t]^{m+1}}{m+1}.
\end{align}
\end{lemma}

\section*{Acknowledgments}
 This work was conducted with Government support under and awarded by DoD, Air Force Office of Scientific Research, National Defense Science and Engineering Graduate (NDSEG) Fellowship, 32 CFR 168a.

\providecommand{\href}[2]{#2}
\providecommand{\arxiv}[1]{\href{http://arxiv.org/abs/#1}{arXiv:#1}}
\providecommand{\url}[1]{\texttt{#1}}
\providecommand{\urlprefix}{URL }

\medskip
Received xxxx 20xx; revised xxxx 20xx.
\medskip

\end{document}